\newtheorem{theorem}{Theorem}[section]
\newtheorem{lemma}[theorem]{Lemma}
\theoremstyle{definition}
\newtheorem{definition}[theorem]{Definition}
\theoremstyle{proposition}
\newtheorem{proposition}[theorem]{Proposition}
\newtheorem{remark}[theorem]{Remark}
\newcommand{\lr}[1]{\langle #1 \rangle}
\begin{document}
\title{If a non-RF hyperbolic group exists, then a rigid one exists}
\author{Xuzhi Tang}
\date{}

\begin{abstract}
If there is a non-residually finite hyperbolic group, then there is a non-residually finite rigid hyperbolic group.
\end{abstract}
\maketitle
\section{Introduction}
Whether all hyperbolic groups are residually finite is a famous open problem in geometric group theory \cite[Section~5.3,Page~141]{Gromov}. There has been much work done in this direction. Kapovich and Wise showed that all hyperbolic groups are residually finite if and only if every nontrivial hyperbolic group has a proper finite index subgroup \cite[Theorem~1.2]{KW00}. Agol, Groves, and Manning showed that all hyperbolic groups are residually finite if and only if all hyperbolic groups are QCERF \cite[Theorem~0.1]{QCERF}. In this paper, we show that the existence of a non-residually finite hyperbolic group implies the existence of a non-residually finite rigid hyperbolic group.\\
\begin{definition}
	A group $G$ is \textbf{residually finite} if for every nontrivial element $g$, there is a homomorphism $\phi:G \to F$ with $F$ finite and $\phi(g)$ nontrivial.
\end{definition}

\begin{definition}
	A hyperbolic group $G$ is \textbf{rigid} if $G$ does not split as a nontrivial graph of groups with finite and/or virtually cyclic edge groups.
\end{definition}
\begin{restatable}{theorem}{Main}\label{main}
	If there is a non-residually finite hyperbolic group, then there is a non-residually finite rigid hyperbolic group.
\end{restatable}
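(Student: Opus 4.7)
The plan is to argue the contrapositive: assume every rigid hyperbolic group is residually finite, and conclude that every hyperbolic group is residually finite. Let $G$ be a hyperbolic group; if $G$ is itself rigid we are done by assumption, so suppose $G$ admits a nontrivial splitting over a finite or virtually cyclic subgroup. Combining Stallings' decomposition over finite subgroups with Bowditch's canonical JSJ decomposition of the resulting one-ended factors over two-ended subgroups expresses $G$ as the fundamental group of a graph of groups whose edge groups are finite or two-ended and whose vertex groups fall into three classes: elementary (finite or two-ended), maximal hanging Fuchsian, or rigid. Each vertex group is quasi-convex in $G$, hence itself word-hyperbolic, and the rigid vertex groups are rigid in the sense of the paper.

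Elementary vertex groups are virtually $\mathbb{Z}$ or finite and therefore residually finite; maximal hanging Fuchsian vertex groups are virtually surface groups, hence linear and residually finite. Under the standing assumption, the rigid vertex groups are residually finite as well. The theorem therefore reduces to the following combination statement: \emph{if a hyperbolic group splits as a graph of groups with finite or two-ended edge groups and all vertex groups are residually finite, then the whole group is residually finite}. For finite edge groups this combination is classical, so the substance lies in the two-ended edge group case. The standard strategy is that, given a nontrivial $g \in G$, Bass--Serre theory reduces one to the elliptic case where $g$ lies in a conjugate of some vertex group $V$; one then chooses compatible finite quotients of each vertex group that kill $g$ and identify the incident edge groups, and amalgamates to a finite quotient of $G$ via the universal property of graphs of groups.

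The main obstacle is that this amalgamation requires each two-ended edge group to be separable in its adjacent vertex group. Inside elementary and hanging Fuchsian vertex groups separability is automatic (these are LERF, using Scott's theorem for the latter), so the decisive step is separability of incident two-ended edge groups inside the rigid vertex groups, knowing only that they are residually finite. A direct attack would exploit that JSJ edge groups are contained in maximal two-ended, hence almost malnormal and quasi-convex, subgroups of the rigid vertex groups, and try to upgrade residual finiteness plus almost malnormality plus quasi-convexity to separability (along the lines of Haglund--Wise or Mart\'inez-Pedroza--Wise). A fallback is to bootstrap via the Agol--Groves--Manning equivalence of RF and QCERF by inducting on Delzant's JSJ accessibility complexity, so that the rigid vertex groups at each stage may be assumed QCERF rather than merely RF and separability is automatic. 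Once the combination statement is established, its contrapositive produces a non-RF rigid vertex group inside the JSJ of any non-RF hyperbolic group, completing the proof.
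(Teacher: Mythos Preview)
Your reduction to a combination statement over two-ended edge groups is where the argument breaks down. The claim that a graph of hyperbolic groups with residually finite vertex groups and two-ended edge groups is residually finite is not known, and neither of your proposed attacks closes the gap. The ``direct attack'' would need a theorem of the form \emph{residual finiteness plus quasi-convexity plus almost malnormality of a two-ended subgroup implies separability of that subgroup}; no such result is available without much stronger hypotheses (virtual specialness, cubulation, or the like). The ``fallback'' via Agol--Groves--Manning is circular: their theorem says that \emph{all} hyperbolic groups are QCERF if and only if \emph{all} hyperbolic groups are residually finite, so you cannot upgrade the rigid vertex groups from RF to QCERF without first proving the very statement you are after; inducting on a JSJ-accessibility complexity does not help, since the rigid pieces are terminal in that hierarchy and you have no inductive leverage on them. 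There is also a second, related problem: the ``rigid'' vertex groups of Bowditch's JSJ are rigid only \emph{relative to the incident two-ended edge groups}, not absolutely rigid in the sense of Definition~1.2, so your standing hypothesis does not even apply to them directly.

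The paper sidesteps both issues simultaneously by Dehn filling. Rather than trying to separate two-ended edge groups, one fills the peripheral structure induced by the JSJ edge groups so that in the quotient $\bar G$ the edge groups become \emph{finite}; then the classical combination theorem (graph of groups with finite edge groups and RF vertex groups is RF) applies. The content is in controlling the vertex groups under filling: the QH vertex groups are shown to remain finite-by-(orbifold group) and hence RF (Lemma~2.21), while the universally elliptic vertex groups remain rigid relative to the now-finite images of the incident edge groups by a theorem of Dasgupta, which forces absolute rigidity since finite groups act elliptically on any tree. This is the missing idea in your proposal: you must first pass to a Dehn-filled quotient to convert two-ended edges to finite edges and relative rigidity to absolute rigidity before the combination argument can be run.
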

	For a hyperbolic group, \Cref{RF} and Dunwoody's accessibility (\cite[Theorem~5.1]{Dun85}, \Cref{access}) reduces our argument to the one-ended case. Given $g$ in a one-ended hyperbolic group $G$, \Cref{tech} and \Cref{rigid} (\cite[Theorem~1.2]{Das22}) together with the hypothesis of \Cref{main} gives us a homomorphism that doesn't kill $g$ to a group satisfying the conditions of \Cref{RF}, hence residually finite.  See Section 3 for the detailed proof of \Cref{main}.\par 
	\Cref{tech} is the main technical theorem of this paper. See Section 2 for definitions of the terms involved.
\begin{restatable}{theorem}{Tech}\label{tech}
Let $G$ be a one-ended hyperbolic group that is not finite-by-cocompact Fuchsian, and $(\Gamma,G)$ be its canonical JSJ-decomposition. Let $\mathcal{P}=\{P_1,...,P_n\}$ be the peripheral structure induced by the edge groups of $\Gamma$. Then, for any nontrivial element $g$, the following holds for a cofinal family of fillings $\bar{G} = G(N_1,...,N_n)$ with respect to $\mathcal{P}$:
\begin{enumerate}
	\item[1)] the image of $g$ under the quotient map is nontrivial,
	\item[2)] $\bar{G}$ admits a splitting with the same graph $\Gamma$ and finite edge groups,
	\item[3)] if $G_v$ is flexible, then $\bar{G}_v$ is residually finite.
\end{enumerate}
\end{restatable}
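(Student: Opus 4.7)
The plan is to view $G$ as relatively hyperbolic with peripheral structure $\mathcal{P}$---equivalently, $\mathcal{P}$ is hyperbolically embedded in $G$, which holds because the edge groups of the canonical JSJ of a one-ended hyperbolic group that is not finite-by-cocompact Fuchsian form an almost malnormal family of two-ended subgroups---and then invoke the relatively hyperbolic Dehn filling theorem of Osin and Groves--Manning. This produces, for each tuple $(N_1,\dots,N_n)$ with $N_i\triangleleft P_i$ chosen sufficiently deep (in a cofinal family), a hyperbolic quotient $\bar G = G/\langle\langle N_1 \cup \cdots \cup N_n\rangle\rangle$, after which the three items are verified in turn.

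Item (1) will follow from the standard escape clause of the Dehn filling theorem: for any finite list of nontrivial elements of $G$, a cofinal family of sufficiently deep fillings has kernel disjoint from that list; apply this with the list $\{g\}$. For item (2), I would pass to the Bass-Serre tree $T$ of $\Gamma$. Its edge stabilizers are conjugates of the $P_i$, so the kernel $K = \langle\langle N_1,\dots,N_n\rangle\rangle$ is normally generated by elements lying in edge stabilizers; consequently $\bar G$ acts on the quotient tree $T/K$ with underlying graph $\Gamma$, edge groups $P_e/N_e$ (finite, by choosing each $N_i$ of finite index in the virtually cyclic $P_i$), and vertex groups equal to the quotients of the $G_v$ by the normal closures of the induced peripheral subgroups. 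Ruling out accidental edge collapses or foldings in $T/K$ requires the filling to be deep enough that the induced filling on each vertex group is itself non-degenerate; this one arranges by endowing each $G_v$ with the relatively hyperbolic structure coming from its incident edge groups and applying Dehn filling at the vertex level.

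For item (3), the canonical JSJ in the sense of Bowditch has flexible vertices of hanging Fuchsian type: $G_v$ is virtually the orbifold fundamental group of a compact hyperbolic $2$-orbifold $\Sigma_v$ whose boundary components correspond exactly to the incident edge groups. The induced filling of $G_v$ caps off each boundary circle with a cone point of finite order, producing, for sufficiently deep fillings, a closed hyperbolic $2$-orbifold. Its fundamental group is Fuchsian, hence linear over $\mathbb{R}$, and therefore residually finite by Mal'cev.

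The main obstacle is item (2). Establishing that $\bar G$ genuinely inherits a graph-of-groups decomposition on the \emph{same} graph $\Gamma$, with vertex groups matching the vertex-wise fillings of the $G_v$ and no unexpected identifications, is where the bulk of the work lies: Bass-Serre quotients by normal subgroups can in principle collapse edges or fold vertices, and I expect one needs the quantitative depth bounds from the Dehn filling theorem---together with the inherited relative hyperbolicity of each vertex group---to rule this out.
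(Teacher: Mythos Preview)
Your outline matches the paper's strategy: relatively hyperbolic Dehn filling applied to $(G,\mathcal{P})$, with item (1) from the standard escape clause, item (2) via showing the quotient inherits a graph of groups on the same underlying graph (this is the paper's Lemma~\ref{quotient-graph}, which uses \cite[Proposition~4.4]{QCERF} and the argument of \cite[Lemma~8.1]{AGM14} to rule out precisely the collapses and foldings you anticipate), and item (3) by analysing the flexible vertex groups. You are right that (2) is where most of the work lies.

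There is, however, a genuine gap in your handling of item (3). A QH vertex group in Bowditch's JSJ is not ``virtually the orbifold fundamental group'' in the sense of containing $\pi_1^{orb}(\Sigma_v)$ as a finite-index subgroup; it sits in a short exact sequence $1 \to F \to G_v \to \pi_1^{orb}(\Sigma_v) \to 1$ with $F$ finite, so the orbifold group is a \emph{quotient}, not a subgroup. After filling, $\bar G_v$ is again finite-by-$\pi_1^{orb}(\Sigma')$ --- and even this requires an argument, namely that the fiber $F$ survives the filling and that the short exact sequence persists (Lemma~\ref{quotient-seq} in the paper). Your Mal'cev argument then shows only that the quotient $\pi_1^{orb}(\Sigma')$ is residually finite, and residual finiteness does not in general pass from a quotient back to a finite-kernel extension. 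The paper closes this gap by invoking that $2$-orbifold groups are \emph{good} in the sense of Serre (Lemma~\ref{GOOD}), which is exactly the cohomological condition ensuring that finite-by-$H$ is residually finite when $H$ is.

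Two smaller points you glossed over: boundary components of $\Sigma_v$ need not be circles --- they can be arcs bounded by mirrors, with boundary group $D_\infty$ --- and realising the filling as gluing in a cone point or corner reflector requires each filling kernel to land in the essential cyclic subgroup of the boundary group (Lemma~\ref{orbi-shift}); and you do not address the bookkeeping of intersecting the separate cofinal families (one per QH vertex, plus the one from item (2)) into a single cofinal family, which the paper carries out in Lemma~\ref{QH}.
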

	Property 1 is a standard consequence of Dehn filling. There are plenty of quotients satisfying property 2 (\Cref{quotient-graph}). For property 3, since we are using finite index fillings, the maximal two-ended vertex groups have finite image. Then, we show that the image of each QH vertex group (the other kind of flexible vertex groups) in a smaller family of quotients are residually finite (\Cref{orbi-shift}). Finally, it is straightforward to see that these families have non-empty intersection (\Cref{QH}). See Section 3 for the detailed proof of \Cref{tech}.
\begin{remark}	
	Ashot Minasyan pointed out a simple proof to a stronger fact: If there is a non-residually finite hyperbolic group, then there is a non-residually finite hyperbolic group with Serre's property FA.\par 
	Suppose that there is a non-residually finite hyperbolic group, then there is a hyperbolic group $G_1$ without proper finite index subgroups (\cite[Theorem~1.2]{KW00}), and it has to be non-elementary. Let $G_2$ be a non-elementary hyperbolic group with Serre's property FA. By \cite[Theorem~2]{Ol93}, $G_1$ and $G_2$ have a common non-elementary quotient $G$ that is hyperbolic. $G$ is a non-residually finite hyperbolic group with Serre's property FA.
\end{remark}
	\section*{Acknowledgements}
	The author would like to thank his advisor, Daniel Groves, for proposing this project and guiding the author towards its completion, as well as Aaron Messerla for helpful comments on expositions and references. Thanks to Ashot Minasyan for pointing out a simple proof to a stronger fact.
\section{Relatively Hyperbolic Dehn Filling and JSJ Decomposition}
\begin{definition}\cite[Chapter~III.H,Definition~1.1]{MNPC}
A geodesic metric space is \textbf{$\delta$-hyperbolic} if for each geodesic triangle, each of its sides is contained in the $\delta$-neighborhood of the other two sides.\\
A geodesic metric space is \textbf{hyperbolic} if it is $\delta$-hyperbolic for some $\delta>0$.\\
A finitely generated group $G$ is \textbf{hyperbolic} if its Cayley graph with respect to some finite generating set under the graph metric is hyperbolic.
\end{definition}
\begin{definition}\cite[Chapter~III.$\Gamma$,Definition~3.4]{MNPC}
	Let $X$ be a geodesic metric space and $Y \subseteq X$. The subspace $Y$ is \textbf{quasi-convex} in $X$ if there is $K>0$ such that for any $x,y \in Y$, any geodesic, in $X$, between $x$ and $y$ is in the $K$-neighbourhood of $Y$.\\
	Let $H \leq G$ be a subgroup of a finitely generated $G$. Fix a Cayley graph $C$ of $G$ induced by a finite generating set with graph metric. \\
	The subgroup $H$ is \textbf{quasi-convex} in $G$ if the subset of the vertex set of $C$ given by $H$ is quasi-convex in $C$.
\end{definition}
\begin{definition}
	Let $\mathcal{H}$ be a collection of subgroups of $G$. The collection $\mathcal{H}$ is \textbf{almost malnormal} if for any $H_1,H_2 \in \mathcal{H}$, and any $g \in G, |gH_1g^{-1} \bigcap H_2| = \infty$ implies $H_1 = H_2 \text{ and } g \in H_1$.
\end{definition}
\begin{definition}
	Given a hyperbolic group $G$ and a collection of subgroups $\mathcal{H}=\{H_1,...,H_n\}$, an n-tuple of \textbf{filling kernels} is a choice of a normal subgroup $N_i$ of $H_i$ for each $i$.\\ 
	A \textbf{filling} $\bar{G} = G(N_1,...,N_n)$ of $G$ with respect to $\mathcal{H}$ with filling kernels $(N_1,...,N_n)$ is the quotient group given by $\bar{G} = G/\lr{\lr{N_1,...,N_n}}$.\\
	Given a finite subset $B \subseteq G \setminus \{1\}$, let $C_B= \{(N_1,...,N_n) \mid N_i \trianglelefteq H_i, \forall i,N_i \bigcap B = \emptyset\}$. A property $Q$ holds for \textbf{sufficiently long fillings} of $G$ if there is a finite $B \subseteq G \setminus \{1\}$ such that $Q$ holds for $G(N_1,...,N_n)$ with any filling kernels $(N_1,...,N_n) \in C_B$. When we add \textbf{finite index} to modify any notion of fillings (e.g. finite index fillings), we mean that $[H_i:N_i]<\infty$ for each $i$. \\ 
	A \textbf{cofinal family of finite index filling kernels} is a set $S$ of finite index filling kernels so that for any finite $B \subseteq G \setminus \{1\}$, the intersection $S \bigcap C_B$ is non-empty. A \textbf{cofinal family of finite index fillings} are the fillings obtained from using the filling kernels from a cofinal family of finite index filling kernels.
\end{definition}
We make use of the following specialized relative hyperbolic Dehn filling theorem. See \cite[Theorem~2.6]{AGM14} to see why \cite[Theorem~1.1]{OsinDF} specializes in the following way. 
\begin{theorem}[Special Case of {\cite[Theorem~1.1]{OsinDF}}]\label{fill}
	Let $G$ be a hyperbolic group, $\mathcal{H}=\{H_1,...,H_n\}$ an almost malnormal collection of quasiconvex subgroups, and $F \subseteq G$ a finite subset. Then, for sufficiently long finite-index fillings, the following holds:
\begin{itemize}
	\item $\bar{G} = G(N_1,...,N_n)=G/\lr{\lr{N_1,...,N_n}}$ is hyperbolic
	\item $F$ injects into $\bar{G}$ under the quotient map,
	\item for each $i$, the induced map from inclusion $i_*:H_i/N_i \to \bar{G}$ is an embedding
\end{itemize}
\end{theorem}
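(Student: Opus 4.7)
The plan is to derive this as a specialization of Osin's relatively hyperbolic Dehn filling theorem via the Bowditch-type result cited as \cite[Theorem~2.6]{AGM14}. First I would observe that because $G$ is hyperbolic and $\mathcal{H}$ is an almost malnormal collection of quasi-convex subgroups, the pair $(G,\mathcal{H})$ is relatively hyperbolic; this is precisely the content of \cite[Theorem~2.6]{AGM14} (essentially Bowditch's criterion). So the hypotheses of Osin's theorem \cite[Theorem~1.1]{OsinDF} are met.

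Next I would apply \cite[Theorem~1.1]{OsinDF} directly. For any finite set $F$, that theorem produces a finite $B \subseteq G \setminus \{1\}$ such that whenever the filling kernels $(N_1,\dots,N_n)$ lie in $C_B$, the quotient $\bar{G} = G/\lr{\lr{N_1,\dots,N_n}}$ is hyperbolic relative to the collection $\{H_i/N_i\}$, the set $F$ injects into $\bar{G}$, and each natural map $H_i/N_i \to \bar{G}$ is injective. Restricting attention to the subfamily of finite-index filling kernels immediately gives the same three conclusions, except that relative hyperbolicity is stated instead of hyperbolicity.

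To upgrade relative hyperbolicity to hyperbolicity under the finite-index hypothesis, I would use that finite-index filling kernels force each peripheral quotient $H_i/N_i$ to be finite. It is standard that a group hyperbolic relative to a collection of finite subgroups is itself hyperbolic (the cusped space is quasi-isometric to the Cayley graph of the group once the parabolics are finite, or equivalently one applies Osin's characterization of relatively hyperbolic groups with finite peripherals). Applying this observation to $\bar{G}$, relatively hyperbolic with respect to the finite collection $\{H_i/N_i\}$, gives that $\bar{G}$ is hyperbolic in the absolute sense, completing the three conclusions.

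The main (minor) obstacle is the bookkeeping between the relative and absolute settings: one must be careful that the finite index fillings form a nonempty subfamily of the fillings permitted by Osin's finite set $B$, but this is automatic since $B$ is finite and we are free to restrict attention to those finite-index filling kernels $(N_1,\dots,N_n)$ with $N_i \cap B = \emptyset$ for each $i$. Beyond that, the proof is essentially a direct citation of \cite[Theorem~1.1]{OsinDF} through \cite[Theorem~2.6]{AGM14}, together with the standard reduction from ``relatively hyperbolic with finite peripherals'' to ``hyperbolic.''
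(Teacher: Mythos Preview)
Your proposal is correct and follows exactly the approach the paper indicates: the paper does not give a standalone proof of this theorem but simply points to \cite[Theorem~2.6]{AGM14} as the reason \cite[Theorem~1.1]{OsinDF} specializes in this way. Your write-up fleshes out that citation precisely, including the standard observation that finite-index fillings yield finite peripheral quotients and hence absolute hyperbolicity.
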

We utilize graph of groups heavily, but we refer the reader to \cite{Trees} for detailed definitions and basic structure theorems. The following propositions are important in our arguments.
\begin{proposition}[{\cite[Chapter~2,Proposition~12]{Trees}}]\label{RF}
The fundamental group of a finite graph of groups with finite edge groups and residually finite vertex groups is residually finite.
\end{proposition}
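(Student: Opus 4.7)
The plan is as follows. Given any nontrivial $g \in G$, I will find for each vertex $v$ a finite-index normal subgroup $K_v \trianglelefteq G_v$ such that the quotient maps $G_v \to G_v/K_v$ are compatible with the edge inclusions (so they assemble into a new graph of groups with the same graph and same finite edge groups, but with \emph{finite} vertex groups), and such that the image of $g$ in the new fundamental group is nontrivial. Since the fundamental group of a finite graph of finite groups acts on its Bass--Serre tree with finite vertex stabilizers, it is virtually free, hence residually finite; pulling back a finite quotient that separates the image of $g$ from $1$ yields the desired finite quotient of $G$.

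First, I would fix a maximal subtree $T$ of $\Gamma$ and write $g$ in reduced Bass--Serre normal form
\[
g \;=\; r_0 \, t_{e_1} \, r_1 \, t_{e_2} \cdots t_{e_k} \, r_k,
\]
with $r_i$ in the vertex group at the terminal vertex $v_i$ of the subpath, satisfying the standard reducedness condition: whenever $e_{i+1} = \bar{e}_i$, we have $r_i \notin \iota_{e_{i+1}}^-(G_{e_{i+1}})$. If $k=0$, then $g = r_0 \neq 1$ lies in a single vertex group. For each vertex $v$, assemble a finite \emph{bad set} $B_v \subseteq G_v \setminus \{1\}$ containing all nontrivial elements of each incident edge image $\iota_e(G_e)$; and, for each index $i$ with $v_i = v$, all elements of the form $r_i \cdot \iota_{e_{i+1}}^-(h)^{-1}$ for $h \in G_{e_{i+1}}$ (these are nontrivial by reducedness), together with $r_0$ itself when $k=0$.

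Next, using residual finiteness of $G_v$ and the finiteness of $B_v$, I choose a finite-index normal subgroup $K_v \trianglelefteq G_v$ with $K_v \cap B_v = \emptyset$. Because $K_v$ meets each $\iota_e(G_e)$ trivially, the edge inclusions descend to injections $G_e \hookrightarrow G_v/K_v$, so the data $(\Gamma, \{G_v/K_v\}, \{G_e\})$ form a bona fide graph of groups. Let $\bar{G}$ denote its fundamental group. The quotient maps $G_v \twoheadrightarrow G_v/K_v$ are compatible with the (unchanged) edge data and hence induce a surjection $\pi : G \twoheadrightarrow \bar{G}$ fixing each $t_e$. The bad-set conditions guarantee that in $G_{v_i}/K_{v_i}$ one still has $\overline{r_i} \notin \iota_{e_{i+1}}^-(G_{e_{i+1}})$ whenever $e_{i+1} = \bar{e}_i$ (and $\overline{r_0} \neq 1$ when $k=0$), so the image expression for $\pi(g)$ is again a reduced normal form in $\bar{G}$; the Bass--Serre normal form theorem then gives $\pi(g) \neq 1$. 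Finally, $\bar{G}$ is the fundamental group of a finite graph of finite groups, so it is virtually free, in particular residually finite, and a finite quotient separating $\pi(g)$ from $1$ composes with $\pi$ to separate $g$ from $1$ in $G$.

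The main obstacle is purely combinatorial bookkeeping: one must enumerate precisely which elements, if they slipped into some $K_v$, would either destroy the injectivity of an edge embedding in the quotient or cause a cancellation that trivializes the normal form of $g$. Once this finite list is written down correctly, residual finiteness of each vertex group handles it, and the rest is the normal form theorem together with the standard fact that finite graphs of finite groups have virtually free (hence residually finite) fundamental group.
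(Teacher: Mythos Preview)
Your argument is correct and is essentially the classical proof found in Serre's \emph{Trees} (the reference the paper cites). The paper itself does not supply a proof of this proposition; it merely quotes it as \cite[Chapter~2, Proposition~12]{Trees}, so there is nothing further to compare. One small point of bookkeeping: the elements $r_i\cdot\iota_{e_{i+1}}^{-}(h)^{-1}$ that you add to $B_v$ should be collected only for those indices $i$ with $e_{i+1}=\bar e_i$, since reducedness guarantees their nontriviality only in that case; for non-backtracking pairs no condition on $r_i$ is needed and the normal form remains reduced automatically.
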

\begin{proposition}[{\cite[Proposition~1.2]{Bow98}}]\label{quasi}
	Let $G$ be a hyperbolic group with a graph of groups decomposition where all edge groups are quasi-convex, then all vertex groups are quasi-convex (hence hyperbolic).
\end{proposition}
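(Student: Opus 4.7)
The plan is to exploit the Bass--Serre tree $T$ of the given graph-of-groups decomposition together with the $\delta$-hyperbolicity of the Cayley graph $\Gamma$ of $G$. Fix a finite generating set $S$ for $G$ containing generating sets for every vertex group and edge group, and let $v\in T$ be a vertex whose stabilizer is a chosen vertex group $G_v$. The goal is to show that for any two elements $g_1,g_2\in G_v$, every $\Gamma$-geodesic between them lies in a uniform neighborhood of $G_v$.

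First I would construct the coarse tree projection $\pi:\Gamma\to T$ defined by $\pi(g)=g\cdot v$. Since every generator in $S$ moves $v$ by a bounded amount in $T$, the map $\pi$ is $L$-Lipschitz for an $L$ depending only on $S$ and on the decomposition. A $\Gamma$-geodesic $\gamma$ from $g_1$ to $g_2$ then yields a walk $\pi\circ\gamma$ in $T$ that starts and ends at $v$. I would decompose $\gamma$ into maximal sub-arcs whose $\pi$-image leaves $v$: each such sub-arc begins at some $h$ and ends at some $h'$ with $hv=h'v=v$, and its interior lies in the subtree on the far side of a single edge $e$ adjacent to $v$. By the standard structure of Bass--Serre trees, this forces $h,h'\in G_v$ and $h^{-1}h'$ to belong to the stabilizer of $e$, which is a conjugate of an edge group of the decomposition.

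Next, invoking the hypothesis that edge groups are quasi-convex, the subsegment of $\gamma$ from $h$ to $h'$ lies in a uniform neighborhood of the coset $h\cdot G_e$, hence in a uniform neighborhood of $h\in G_v$. Concatenating these bounds over all excursions of $\gamma$ yields a uniform bound on the distance from any point of $\gamma$ to $G_v$, which is the desired quasi-convexity. The ``hence hyperbolic'' clause is then immediate, since quasi-convex subgroups of hyperbolic groups are hyperbolic.

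The main obstacle I anticipate is making the excursion decomposition clean enough that the edge-group quasi-convexity constant propagates to a single global bound independent of $g_1,g_2$: an excursion can be very long and $\gamma$ may wander deep into $T$, so some care (for instance induction on the diameter of $\pi(\gamma)$, or replacement of $\gamma$ by a normal-form path that traverses edge cosets efficiently and then comparison back via slimness of triangles) is needed. An alternative that sidesteps this issue is to set up a Bestvina--Feighn-style tree of hyperbolic spaces over $T$, with vertex spaces the Cayley graphs of the vertex groups and edge spaces the Cayley graphs of the edge groups, and verify the annulus flaring condition (guaranteed by the hyperbolicity of $G$ together with quasi-convex edge groups); the conclusion that vertex spaces embed quasi-isometrically into the total space then gives quasi-convexity of $G_v$ in $G$.
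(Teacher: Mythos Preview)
The paper does not prove this proposition at all; it is quoted from Bowditch \cite[Proposition~1.2]{Bow98} and used as a black box, so there is no in-house argument to compare against. Your overall shape---project to the Bass--Serre tree, cut the geodesic into excursions, and control each excursion using quasi-convexity of an edge coset---is the right one, and is essentially how Bowditch's argument goes.

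There is, however, a genuine gap in the excursion step as you state it. From ``$h,h'\in G_v$ and the interior of the sub-arc projects past a single edge $e$ adjacent to $v$'' you \emph{cannot} conclude that $h^{-1}h'\in\mathrm{Stab}(e)$. Both $h$ and $h'$ fix $v$, but an element of $G_v$ may permute the edges at $v$ arbitrarily; nothing in your set-up pins down the action of $h$ or $h'$ on $e$. (There is also a preliminary issue: because $\pi$ is only $L$-Lipschitz with $L$ possibly $>1$, a maximal sub-arc whose projection leaves $v$ need not stay on the far side of a \emph{single} edge---consecutive points can jump between branches of $T\setminus\{v\}$.) The correct version does not try to put $h,h'$ themselves into an edge coset. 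Instead, observe that the edge $e$ corresponds to a coset $cG_{e'}$ whose associated edge space \emph{coarsely separates} the tree of spaces; hence any sub-arc of $\gamma$ that reaches the far side of $e$ must pass within a uniformly bounded distance of $cG_{e'}$ on the way out (at some $q_1$) and on the way back (at some $q_2$). Quasi-convexity of the edge group then traps the segment $[q_1,q_2]$ in a uniform neighbourhood of $cG_{e'}$, which in turn lies in a uniform neighbourhood of $G_v$ since $e$ is incident to $v$. This gives the global bound you want without any claim about $h^{-1}h'$. Your Bestvina--Feighn alternative is a legitimate route, but note that the combination theorem itself produces hyperbolicity of the total space, not quasi-isometric embedding of vertex spaces; to extract the latter you would still need essentially the separation argument above, or an explicit citation for it.
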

\begin{definition}
	An \textbf{elementary splitting} of a hyperbolic group $G$ is a graph of groups decomposition of $G$ with finite or two-ended edge groups. 
\end{definition}
	The precise definition of the number of ends of a group is not essential in our arguments here. The reader can refer to \cite[Chapter~I.8, Definition~8.30]{MNPC}. The essential part is the following fact.
\begin{proposition}[{\cite[Theorem~8.32]{MNPC}}]\label{ends}
	Let $G$ be a finitely generated group. Then, the number of ends of $G$ is 0, 1, 2, or $\infty$. $G$ has zero ends if and only if $G$ is finite. $G$ has two ends if and only if $G$ is virtually infinite cyclic.
\end{proposition}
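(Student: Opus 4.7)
The plan is to prove this classical Freudenthal--Hopf theorem by working with a Cayley graph $X$ of $G$ with respect to a finite generating set, viewing the set of ends as $\varprojlim_K \pi_0(X \setminus K)$, the inverse limit over finite subgraphs $K \subset X$. For the zero-ends characterization: if $G$ is finite then $X$ is finite and has no unbounded complement components, so $e(G) = 0$; conversely, if $G$ is infinite, $X$ is infinite and locally finite, so by K\"onig's lemma every finite $K$ leaves at least one unbounded component, yielding $e(G) \geq 1$.

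Next, I would establish the dichotomy that $e(G) \geq 3$ forces $e(G) = \infty$. The left translation action of $G$ on $X$ permutes ends. Given a finite $K$ with $X \setminus K$ having $n \geq 3$ unbounded components $C_1, \ldots, C_n$, pick $g \in G$ with $gK$ contained deep inside $C_1$. A direct count then shows $X \setminus (K \cup gK)$ has at least $2n - 2$ unbounded components: the $C_2, \ldots, C_n$ survive intact outside $gK$, and inside $C_1$ the $g$-translates $gC_2, \ldots, gC_n$ contribute at least $n-1$ further unbounded components (their closures avoid $K$). Iterating produces arbitrarily many ends.

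For the two-ends case, the easy direction uses quasi-isometry invariance of the number of ends: $\mathbb{Z}$ has two ends, so any virtually cyclic group, being quasi-isometric to $\mathbb{Z}$, has two ends. For the converse, assume $e(G) = 2$. The action of $G$ on its two-element end set defines a homomorphism $G \to \mathbb{Z}/2$ with kernel $H$ of index at most two fixing both ends. I would then construct a homomorphism $\phi\colon H \to \mathbb{Z}$ of finite kernel as follows: fix a finite $K$ with $X \setminus K = X_+ \sqcup X_-$ splitting into two unbounded parts; for each $h \in H$ the translate $hK$ lies a bounded distance from $K$ along the ``axis'' of translation, and $\phi(h)$ records the signed translation amount. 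The kernel consists of $h$ keeping $hK$ within bounded distance of $K$, a finite condition by local finiteness. Thus $H$, and hence $G$, is virtually $\mathbb{Z}$.

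The main obstacle is the clean construction of $\phi\colon H \to \mathbb{Z}$ in the last step; checking that $\phi$ is well-defined and truly a homomorphism (rather than only a quasimorphism) requires care in choosing the reference geodesic and bounding the additive error. A conceptually cleaner alternative is to invoke Stallings' theorem on ends of groups to split $G$ over a finite subgroup once $e(G) \geq 2$ is known; the two-ends assumption then forces both factors of the splitting to be index-two extensions of the finite edge group, directly presenting $G$ as virtually cyclic (infinite-dihedral-like in the amalgam case, finite-by-$\mathbb{Z}$ in the HNN case).
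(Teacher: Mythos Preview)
The paper does not supply a proof of this proposition at all: it is stated as a citation of \cite[Theorem~8.32]{MNPC} and used as a black box, with the author explicitly remarking just before it that ``the precise definition of the number of ends of a group is not essential in our arguments here.'' So there is no proof in the paper to compare against; your write-up is a self-contained sketch of the classical Freudenthal--Hopf argument where the paper simply invokes the reference.

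That said, a brief comment on your sketch. The zero-end case and the $e(G)\geq 3 \Rightarrow e(G)=\infty$ doubling argument are standard and essentially correct, though your component count ``$2n-2$'' deserves one more sentence: one should note that $K$ lies in a single translated component $gC_{j_0}$, so that $gC_j$ for $j\neq j_0$ are genuinely disjoint from $K$ and hence give $n-1$ new unbounded pieces inside $C_1$, disjoint from $C_2,\dots,C_n$. For the two-ended case, you are right to flag the map $\phi\colon H\to\mathbb{Z}$ as the delicate point: as written, ``signed translation amount along the axis'' only yields a quasimorphism, and promoting it to a homomorphism needs either a limiting/averaging argument or a direct cocompactness argument on a bi-infinite geodesic. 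Your fallback via Stallings' theorem is logically fine but is considerably heavier than the statement being proved; the treatment in \cite{MNPC} (and most textbooks) instead argues directly that an end-preserving subgroup of index $\leq 2$ acts properly and cocompactly on a line, which gives virtual cyclicity without invoking Stallings.
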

\begin{definition}
	Let $(\Gamma,G)$ be a graph of groups. Let $v$ be a vertex of $\Gamma$ and $G_v$ the associated vertex group. Let \textbf{$Inc|_v$} or \textbf{$Inc|_{G_v}$} denote the collection of all edge groups for edges incident to $v$.
\end{definition}
	The reader can jump to the proof of \Cref{main} at the end of the paper to see why we are only worried about one-ended hyperbolic groups here. For a one-ended hyperbolic group $G$, we have a special splitting (the canonical JSJ decomposition) $(\Gamma,G)$ with all edge groups 2-ended by \cite[Theorem~6.2]{Bow98}. We follow the construction of \cite[Section~3.1]{QCERF} to get an induced peripheral structure from the edge groups. \begin{definition}
	Given a hyperbolic group $G$ and a finite collection $\mathcal{H} = \{H_1,...,H_k\}$ of 2-ended subgroups, we get an \textbf{induced peripheral structure} $\mathcal{P}$ by replacing each 2-ended subgroup with its commensurator in $G$ and then choosing one subgroup per conjugacy class. 
\end{definition} 
\begin{remark}
	In our situation, we only use two-ended subgroups to induce peripheral structures. Infinite cyclic subgroups of hyperbolic groups are quasi-convex by \cite[Chapter~III.$\Gamma$,Corollary~3.10]{MNPC}, so two-ended subgroups are also quasi-convex. Thus, we can follow the procedures of \cite[Section~3.1]{QCERF} for each two-ended subgroup. If $H$ is one of the two-ended subgroups, the groups in the malnormal core described in the procedure of \cite[Section~3.1]{QCERF} are infinite subgroups of $H$. Since $H$ is two-ended, any infinite subgroup has finite index in it, so when we take commensurators (in $G$) of the groups in the malnormal core, all we get is the commensurator of $H$. So, our definition of induced peripheral structure agrees with \cite{QCERF} in our situation.
\end{remark}
\begin{remark}	
	In a hyperbolic group, the commensurator of a two-ended subgroup $H$ is the maximal two-ended subgroup containing $H$. In particular, each of our two-ended subgroups $H$ has finite index in its commensurator.
\end{remark}
	We would like to apply \Cref{fill}, so we need the peripheral structure to be an almost malnormal collection of quasi-convex subgroups.
\begin{lemma}\label{almost}
	Let $G$ be a hyperbolic group and $\mathcal{H}$ a finite collection of two-ended subgroups. Then, the peripheral structure $\mathcal{P}$ induced by $\mathcal{H}$ is an almost malnormal collection of quasi-convex subgroups.
\end{lemma}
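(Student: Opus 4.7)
The proof splits naturally into a quasi-convexity part and an almost malnormality part. For quasi-convexity, I would simply invoke the facts already gathered: each $P_i \in \mathcal{P}$ is the commensurator in $G$ of some two-ended $H_i \in \mathcal{H}$, which by the second remark is itself two-ended (the maximal two-ended subgroup containing $H_i$). Two-ended subgroups of a hyperbolic group are quasi-convex by \cite[Chapter~III.$\Gamma$,Corollary~3.10]{MNPC}, as already noted in the first remark, so every $P_i$ is quasi-convex.

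For almost malnormality, the key tool is that in a hyperbolic group every infinite subgroup $K$ of a two-ended subgroup is contained in a \emph{unique} maximal two-ended subgroup $M_K$ of $G$; this is a standard consequence of the fact that centralizers of infinite-order elements are virtually cyclic. I would first record (or quickly verify) the useful corollary that $\operatorname{Comm}_G(P_i) = P_i$: the containment $P_i \subseteq \operatorname{Comm}_G(P_i)$ is obvious, and if $g$ commensurates $P_i$ then since $H_i$ has finite index in $P_i$ the intersection $gH_ig^{-1} \cap H_i$ is still infinite, so $g \in \operatorname{Comm}_G(H_i) = P_i$.

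Now suppose $|gP_ig^{-1} \cap P_j| = \infty$ for some $g \in G$. Because $gP_ig^{-1}$ and $P_j$ are both two-ended, any infinite subgroup has finite index in each, so the intersection $K := gP_ig^{-1} \cap P_j$ is an infinite subgroup of finite index in both. By uniqueness of the maximal two-ended subgroup $M_K$ containing $K$, both $gP_ig^{-1}$ and $P_j$ sit inside $M_K$. But each of these is itself a maximal two-ended subgroup: $P_j$ is maximal by the second remark, and $gP_ig^{-1} = g\operatorname{Comm}_G(H_i)g^{-1} = \operatorname{Comm}_G(gH_ig^{-1})$ is again a commensurator of a two-ended subgroup, hence maximal two-ended. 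Therefore $gP_ig^{-1} = M_K = P_j$, so $P_i$ and $P_j$ lie in the same $G$-conjugacy class. Since $\mathcal{P}$ contains exactly one representative per conjugacy class, we must have $i = j$, and then $gP_ig^{-1} = P_i$ yields $g \in N_G(P_i) \subseteq \operatorname{Comm}_G(P_i) = P_i$, which is the required conclusion.

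The only mildly subtle step is the uniqueness of $M_K$ — everything else is bookkeeping — so I would spend a sentence citing or deriving it from the structure of elementary subgroups in hyperbolic groups; the rest is organizing the chain of containments so the maximality of both conjugate subgroups forces equality.
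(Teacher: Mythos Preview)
Your proof is correct, but it takes a genuinely different route from the paper's for the almost malnormality part. The paper argues by pure commensurator bookkeeping: assuming $|gP_1g^{-1}\cap P_2|=\infty$, it picks an arbitrary $p\in P_2$ and, through a chain of finite-index intersection computations involving $H_1$ and $H_2$, shows that $pgH_1g^{-1}p^{-1}\cap gH_1g^{-1}$ is infinite, hence $p\in\operatorname{Comm}_G(gH_1g^{-1})=gP_1g^{-1}$; a symmetric argument gives the reverse inclusion, and the rest is as in your final paragraph. No structural fact about hyperbolic groups beyond ``two-ended implies infinite subgroups have finite index'' is used.

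Your argument instead imports the uniqueness of the maximal two-ended subgroup $M_K$ containing a given infinite virtually cyclic $K$, and then observes that both $gP_ig^{-1}$ and $P_j$ are themselves maximal two-ended, forcing $gP_ig^{-1}=M_K=P_j$ in one stroke. This is cleaner and more conceptual, and the auxiliary fact $\operatorname{Comm}_G(P_i)=P_i$ you record is a nice way to close the loop. The trade-off is that you must cite or justify the uniqueness of $M_K$ (from the structure of elementary subgroups in hyperbolic groups), whereas the paper's argument is entirely self-contained from the definition of commensurator. Both approaches handle quasi-convexity identically, via the remarks preceding the lemma.
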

\begin{proof} 
	Suppose $|gP_1g^{-1} \bigcap P_2| = \infty$. Say $P_1$ is the commensurator of $H_1$ and $P_2$ is the commensurator of $H_2$. Then, $gH_1g^{-1} \bigcap H_2$ is a finite index subgroup in both $gH_1g^{-1}$ and $H_2$. Take an element $p \in P_2$ and consider the following subgroup,
\begin{align*}
	& N:=(pgH_1g^{-1}p^{-1} \bigcap gH_1g^{-1}) \bigcap (pH_2p^{-1} \bigcap H_2)\\
	 & = p(gH_1g^{-1} \bigcap H_2)p^{-1} \bigcap (gH_1g^{-1} \bigcap H_2)
\end{align*}
	 Now, $p(gH_1g^{-1} \bigcap H_2)p^{-1}$ is a finite index subgroup of $pH_2p^{-1}$. So, $N$ has finite index in $pH_2p^{-1} \bigcap gH_1g^{-1} \bigcap H_2$ which in turn has finite index in $pH_2p^{-1} \bigcap H_2$. By the definition of commensurator, $pH_2p^{-1} \bigcap H_2$ has finite index in $H_2$. So, $N$ has finite index in $H_2$. This means $N$ is infinite and so is $pgH_1g^{-1}p^{-1} \bigcap gH_1g^{-1}$. Since $gH_1g^{-1}$ and $pgH_1g^{-1}p^{-1}$ are two-ended, the intersection $pgH_1g^{-1}p^{-1} \bigcap gH_1g^{-1}$ has finite index in both of them. Therefore, $p$ is in the commensurator of $gH_1g^{-1}$, which is $gP_1g^{-1}$. A symmetric argument shows the other inclusion, so $P_2 = gP_1g^{-1}$. Since distinct elements of $\mathcal{P}$ are not conjugate, $P_1 = P_2$. Therefore, we can replace $H_2$ with $H_1$ and see that $gH_1g^{-1} \bigcap H_1$ has finite index in $gH_1g^{-1}$ and in $H_1$, meaning $g$ is in the commensurator of $H_1$, which is $P_1$. So, $gP_1g^{-1} = P_1$. Since $g$, $P_1$, and $P_2$ were chosen arbitrarily, $\mathcal{P}$ is almost malnormal.
\end{proof}
\begin{definition}[{\cite[Chapter~3]{QCERF}}]
	A homomorphism $\phi: H \to G$ of hyperbolic groups with peripheral structures $\mathcal{D}$ and $\mathcal{P}$ \textbf{respects the peripheral structure on} $H$ if for every $D_i \in \mathcal{D}$, there is a $P_j \in \mathcal{P}$ such that $\phi(D_i)$ is conjugate into $P_j$ in $G$.
\end{definition}
\begin{definition}[{\cite[Definition~3.2]{QCERF}}]
	If $\phi: H \to G$ is a homomorphism which respects the peripheral structure on $H$, then any filling of $G$ induces a filling of $H$ as follows. For each $i$, there is some $c_i = c(D_i)$ in $G$ and some $P_{j_i} \in \mathcal{P}$ such that $\phi(D_i) \subseteq c_i P_{j_i} c_i^{-1}$. The \textbf{induced filling kernels} $K_i \trianglelefteq D_i$ are given by
\begin{align*}
	K_i = \phi^{-1}(c_i N_{j_i} c_i^{-1}) \bigcap D_i
\end{align*}
The \textbf{induced filling} is $H \twoheadrightarrow H(K_1,...,K_n)$. The map $\phi$ induces a homomorphism $\bar{\phi}: H(K_1,...,K_n) \to G(N_1,...,N_m)$.
\end{definition}
\begin{definition}[{\cite[Definition~3.3]{QCERF}}]
	Let $H$ be a quasi-convex subgroup, with peripheral structure $\mathcal{D}$, of a hyperbolic group $G$ and $\mathcal{P}$ a peripheral structure on $G$ such that the inclusion $H \xhookrightarrow{} G$ respects the peripheral structure on $H$. A filling $G \twoheadrightarrow G(N_1,...,N_m)$ is an $H$\textbf{-filling} if whenever $H \bigcap g P_i g^{-1}$ is infinite, we have $g N_i g^{-1} \subseteq s D_j s^{-1} \subseteq H$ for some $s \in H$ and $D_j \in \mathcal{D}$.\par 
	Given a collection $\mathcal{H}$ of quasi-convex subgroups, a filling is an $\mathcal{H}$\textbf{-filling} if it is an $H$-filling for each $H \in \mathcal{H}$. 
\end{definition}
The following lemma is immediate from the definitions.
\begin{lemma}\label{respect}
	Let $G$ be a hyperbolic group with a finite splitting $(\Gamma,G)$ with only two-ended edge groups. Let $\mathcal{P}$ be the peripheral structure on $G$ induced by the edge groups. For each vertex group $G_v$, let $\mathcal{D}_v$ be the peripheral structure induced by $Inc|_{G_v}$. For each edge group $G_e$, let the peripheral structure be $\{G_e\}$. Then, the following inclusions respect peripheral structures:
	\begin{itemize}
		\item each inclusion of a vertex group into $G$,
		\item each inclusion of an edge group into a vertex group,
		\item each inclusion of an edge group into $G$.
	\end{itemize}
\end{lemma}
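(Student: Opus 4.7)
The plan is to verify each of the three inclusions by directly unpacking the definition of ``respects peripheral structure'' together with the construction of the induced peripheral structure. The single underlying fact I will need throughout is that every two-ended subgroup $H$ of $G$ satisfies $H \subseteq \mathrm{Comm}_G(H)$, and that commensurators taken in a subgroup are contained in commensurators taken in the ambient group: $\mathrm{Comm}_K(H) \subseteq \mathrm{Comm}_G(H)$ whenever $H \leq K \leq G$.

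For the inclusion $G_e \hookrightarrow G$ of an edge group into $G$, the peripheral structure on $G_e$ is the singleton $\{G_e\}$, and by the construction of $\mathcal{P}$ the group $\mathrm{Comm}_G(G_e)$ is $G$-conjugate to some $P_j \in \mathcal{P}$; combined with $G_e \subseteq \mathrm{Comm}_G(G_e)$, this is immediate. The inclusion $G_e \hookrightarrow G_v$ of an edge group into an incident vertex group is handled identically, but inside $G_v$: since $G_e$ belongs to $Inc|_{G_v}$, the group $\mathrm{Comm}_{G_v}(G_e)$ is $G_v$-conjugate to some $D \in \mathcal{D}_v$, and $G_e$ is contained in it.

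The inclusion $G_v \hookrightarrow G$ is the only case that sees both peripheral structures. I would take a generic $D \in \mathcal{D}_v$, write it as a $G_v$-conjugate of $\mathrm{Comm}_{G_v}(G_e)$ for some edge $e$ incident to $v$, invoke the containment $\mathrm{Comm}_{G_v}(G_e) \subseteq \mathrm{Comm}_G(G_e)$, and then use the $G$-conjugacy of $\mathrm{Comm}_G(G_e)$ to the chosen representative $P_j \in \mathcal{P}$. Composing the two conjugators produces a single element of $G$ that conjugates $D$ into $P_j$, as required. I do not foresee any real obstacle; the only point needing attention is that $\mathcal{P}$ and $\mathcal{D}_v$ are defined only up to choice of conjugacy-class representative, but since the definition of ``respects peripheral structure'' permits an arbitrary conjugator in the ambient group, this bookkeeping is absorbed into the conjugators $c_i$ appearing in the definition of induced filling.
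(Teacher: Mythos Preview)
Your argument is correct and is precisely the routine unpacking of the definitions that the paper has in mind; the paper itself provides no proof, stating only that the lemma ``is immediate from the definitions.'' The one minor simplification available to you is that each $D \in \mathcal{D}_v$ is by construction equal to (not merely $G_v$-conjugate to) some $\mathrm{Comm}_{G_v}(G_e)$, so no inner conjugator is needed there, but this does not affect the validity of what you wrote.
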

	With \Cref{respect}, we can deduce an important lemma regarding the effect of relatively hyperbolic Dehn filling on the graph of groups decomposition of a hyperbolic group.
\begin{lemma}\label{quotient-graph}
Let $(\Gamma,G)$ be an elementary splitting of $G$ with edge groups $\mathcal{E}$. Let $\mathcal{P}$ be the peripheral structure induced by the edge groups. Then, for sufficiently long $\mathcal{E}$-fillings with respect to $\mathcal{P}$, $\pi:G \to \bar{G}$ of $G$, the quotient $\bar{G}$ has a graph of groups splitting $(\Gamma,\bar{G})$ such that for each vertex $v$, $\bar{G}_v$ is the induced filling of $G_v$ and for each edge $e$, $\bar{G}_e$ is the induced filling of $G_e$.
\end{lemma}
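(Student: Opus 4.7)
The plan is to construct $(\Gamma, \bar G)$ directly by replacing each vertex and edge group of $(\Gamma, G)$ with its induced filling, verify that the induced edge inclusions $\bar G_e \to \bar G_v$ are injective so this assembly is a bona fide graph of groups, and then identify its fundamental group with $\bar G$. I first set up the fillings uniformly on all the pieces: by \Cref{quasi} each vertex group $G_v$ is quasi-convex and hence hyperbolic, and the peripheral structure $\mathcal{D}_v$ induced on $G_v$ by $Inc|_{G_v}$ is almost malnormal and quasi-convex by the same argument as \Cref{almost}. Enlarging the avoidance set from \Cref{fill} if necessary, I arrange the conclusions of \Cref{fill} to hold simultaneously for $(G,\mathcal{P})$, for each $(G_v,\mathcal{D}_v)$, and for each edge group $(G_e,\{G_e\})$. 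By \Cref{respect} the inclusions $G_e \hookrightarrow G_v \hookrightarrow G$ respect peripheral structures, so the chosen filling of $(G,\mathcal{P})$ induces compatible fillings on each $G_v$ and $G_e$; inspecting the formula $\phi^{-1}(cN_jc^{-1}) \cap D$ shows that the induced kernel on $G_e$ computed inside $G_v$ agrees with the one computed directly inside $G$.

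With these quotients $\bar G_v$, $\bar G_e$ in hand, the original inclusions $G_e \hookrightarrow G_v$ descend to well-defined maps $\bar G_e \to \bar G_v$; these are injective by the third bullet of \Cref{fill} applied to $(G_v,\mathcal{D}_v)$. Thus $(\Gamma, \bar G)$ is a legitimate graph of groups; write $H := \pi_1(\Gamma, \bar G)$. The universal property of $G = \pi_1(\Gamma, G)$ assembles the quotient maps $G_v \to \bar G_v$ into a surjection $\pi : G \twoheadrightarrow H$. By standard Bass--Serre theory, $\ker \pi$ equals the normal closure in $G$ of $\bigcup_v \ker(G_v \to \bar G_v)$, and each $\ker(G_v \to \bar G_v)$ is in turn the normal closure in $G_v$ of its induced peripheral kernels $G_v \cap cN_jc^{-1}$; in particular $\ker \pi \subseteq \lr{\lr{N_1,\dots,N_n}}$, so $\pi$ factors through $\bar G$.

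The reverse containment $\lr{\lr{N_1,\dots,N_n}} \subseteq \ker \pi$ is the main obstacle and is precisely why the hypothesis is phrased as an $\mathcal{E}$-filling rather than merely a filling with respect to $\mathcal{P}$. For each peripheral $P_j$, the $\mathcal{E}$-filling condition forces every conjugate $gN_jg^{-1}$, for $g$ such that $gP_jg^{-1}$ meets some edge group $G_e$ infinitely, to lie inside a conjugate of $G_e$, hence in $\ker(G_e \to \bar G_e) \subseteq \ker \pi$. Because each $P_j$ is by construction the commensurator of an edge group $G_e$, the intersection $P_j \cap G_e$ has finite index in $G_e$ and is therefore infinite, so every generator of $\lr{\lr{N_1,\dots,N_n}}$ is captured this way. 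Consequently $\pi$ descends to an isomorphism $\bar G \xrightarrow{\sim} H$, which is exactly the claimed graph of groups structure on $\bar G$.

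The ancillary work --- verifying that $\mathcal{D}_v$ is almost malnormal, that induced kernels match along the two ways of descending to an edge group, and that the new edge maps inject --- is bookkeeping layered on \Cref{fill}, \Cref{almost}, and \Cref{respect}. The only substantive technical input beyond these is the careful use of the $\mathcal{E}$-filling hypothesis in the last step, which is needed to ensure that no filling relation escapes the new graph of groups structure.
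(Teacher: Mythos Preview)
Your argument is correct and reaches the same conclusion, but the route differs from the paper's in a meaningful way.

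The paper works ``top down'': it takes the images $\pi(G_v)$, $\pi(G_e)$ inside $\bar G$, and then quotes \cite[Proposition~4.4]{QCERF} to identify those images with the induced fillings and to get injectivity of the edge maps. Applying that proposition requires knowing that the filling is a $G_v$-filling for each vertex $v$, which the paper verifies via a Bass--Serre tree argument (this is where the $\mathcal{E}$-filling hypothesis enters for them). The identification $\bar G\cong\pi_1(\Gamma,\bar G)$ is then outsourced to \cite[Lemma~8.1]{AGM14}.

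You instead work ``bottom up'': you define $\bar G_v$, $\bar G_e$ as the induced fillings from the outset, obtain injectivity of $\bar G_e\to\bar G_v$ by applying \Cref{fill} directly to the pair $(G_v,\mathcal{D}_v)$, and identify $\pi_1(\Gamma,\bar G)$ with $\bar G$ by an explicit comparison of kernels. Your use of the $\mathcal{E}$-filling hypothesis appears only at the final step, where it forces each $N_j$ into an edge group and hence into $\ker\pi$. This avoids both the $G_v$-filling verification and the appeal to \cite[Lemma~8.1]{AGM14}, at the cost of checking a couple of compatibility statements (that induced kernels on $G_e$ agree whether computed through $G$ or through $G_v$, and that sufficiently long fillings of $G$ induce sufficiently long fillings of each $G_v$). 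Those checks are straightforward, as you indicate.

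What each approach buys: the paper's version is shorter on the page because it cites established machinery, but is less self-contained; your version is more elementary and makes the role of the $\mathcal{E}$-filling hypothesis more transparent.
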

\begin{proof}
	We need to show two things. Firstly, there is an induced graph of groups with the same graph $\Gamma$ and each vertex group and edge group being the image of the original vertex and edge group. Secondly, the fundamental group of this induced graph of groups is indeed the quotient $\bar{G}$.\par 
	For each edge $e$ of $\Gamma$ with endpoints $v$ and $w$, there are injections $G_e \xrightarrow{\phi_v} G_v$ and $G_e \xrightarrow{\phi_w} G_w$. So, we have the following commutative diagram when we apply Dehn filling.\par
\begin{center}
\begin{tikzcd}
G_e \arrow[r, "\phi_v"] \arrow[d, "\pi|_{G_e}"] & G_v \arrow[d, "\pi|_{G_v}"] \\
\pi(G_e) \arrow[r, "\bar{\phi}_v", dotted]      & \pi(G_v)                   
\end{tikzcd}
\end{center}  
	The induced map in the bottom exists if Ker$(\pi|_{G_e}) \subseteq$ Ker$(\pi|_{G_v} \circ \phi_v)$. This is true because we are applying Dehn filling with filling kernels coming from the edge groups and they are included in the vertex groups. We see that we always get a generalized graph of groups $(\Gamma,\bar{G})$ in the sense that the edge maps from the edge groups to the vertex groups may not be injective.\par  
	In our situation, $G$ is hyperbolic, and all edge groups are two-ended (hence quasi-convex), and that implies all vertex groups are quasi-convex by \Cref{quasi}. Then, by \cite[Proposition~7.5]{AGM14}, all of these groups are fully relatively quasi-convex (see \cite[Definition~7.3, Definition~7.8]{AGM14} ). In order to apply \cite[Proposition~4.4]{QCERF} to the vertex groups, we also need to make sure that the filling is a $G_v$ filling for each vertex group $G_v$. We look at the Bass-Serre tree associated to this elementary splitting and suppose $G_v \bigcap gP_ig^{-1}$ is infinite. Let $E_i$ be the edge group containing $N_i$. Since $N_i$ has finite index in $E_i$, we have that $E_i'=G_v \bigcap gE_ig^{-1}$ is infinite. Let $e_i$ be the edge fixed by $E_i$ in the Bass-Serre tree. Then, $gE_ig^{-1}$ is the stabilizer of $ge_i$. Thus, $E_i'$ fixes the arc from $v$ to the endpoint of $ge_i$ farthest to $v$. Let $e$ be the edge on this arc with endpoint $v$, then $E_i'$ is an infinite subset of its stablizer $E$. Since these edge stabilzers are two-ended, having an infinite intersection means they induce the same peripheral group in $G$, so by ensuring the filling kernels are $\mathcal{E}$-filling kernels, we've made sure that $N_i$ conjugates into $E\leq G_v$. Since we used incident edge groups to induce the peripheral structure on $G_v$, there is some $D_j$ in the peripheral structure of $G_v$ and some element $s$ of $G_v$ such that $E \leq sD_js^{-1}$. This means we are taking $G_v$-fillings for each $G_v$. So, we can apply \cite[Proposition~4.4]{QCERF} to see that the induced edge maps must be injections under sufficiently long fillings and the vertex groups and edge groups are exactly induced fillings of the original vertex and edge groups. In other words, $\pi(G_v) = G_v(N_1,N_2,...,N_{k_v})$ with $N_1,...,N_{k_v}$ the filling kernels involved in edge groups incident to $v$, and $\pi(G_e) = G_e/N_e$.\par 
	To see that $\bar{G} \cong \pi_1(\Gamma,\bar{G})$, we can follow the proof of \cite[Lemma~8.1]{AGM14}. We don't need the fully $\mathcal{P}$-elliptic condition since we are only working on level one of their hierarchy and we induced peripheral structures on each vertex group and $G$ itself using the edge groups. So, the fact that elements of kernels lie in the right places for their proof to work is ensured by our set-up.
\end{proof}
\begin{definition}
Let $G$ be a group. Let $\mathcal{A}$ be a family of subgroups of $G$ which is closed under taking subgroups and conjugations. Let $\mathcal{H}$ be a family of subgroups of $G$. A \textbf{$(G,\mathcal{A},\mathcal{H})$-tree} is a tree $T$ together with an action of $G$ such that each edge stabilizer is in $\mathcal{A}$ and each group in $\mathcal{H}$ fixes a point in $T$. Usually, the context is clear so we omit $G$ (or even the whole prefix) and just say $(\mathcal{A},\mathcal{H})$-tree (or tree).\\
A subgroup of $G$ is \textbf{universally elliptic} (with respect to $(\mathcal{A},\mathcal{H})$) if it fixes a point in each $(\mathcal{A},\mathcal{H})$-tree.\\
An $(\mathcal{A},\mathcal{H})$-tree is \textbf{universally elliptic} if all of its edge stabilizers are universally elliptic.
\end{definition}
\begin{definition}
	A \textbf{JSJ $(G,\mathcal{A},\mathcal{H})$-tree} $T$ is an $(G,\mathcal{A},\mathcal{H})$-tree such that:
\begin{itemize}
	\item $T$ is universally elliptic with respect to $(G,\mathcal{A},\mathcal{H})$, and
	\item for every universally elliptic $(G,\mathcal{A},\mathcal{H})$-tree $T'$, there is a $G$-equivariant map $f: T \to T'$
\end{itemize}
The induced graph of groups of a JSJ tree is a \textbf{JSJ decomposition}. \\
A vertex stabilizer of a JSJ tree is \textbf{flexible} if it is not universally elliptic.
\end{definition}
\begin{definition}
	A subgroup $Q \leq G$ is \textbf{quadratically hanging (QH)} if 
\begin{itemize}
	\item $Q$ is a vertex stabilizer in an $(\mathcal{A},\mathcal{H})$-tree $T$ of $G$;
	\item $Q$ is an extension $1 \to F \to Q \to \pi_1^{orb}(\Sigma) \to 1$ where $\Sigma$ is a compact hyperbolic 2-orbifold; we call $F$ the \textbf{fiber}, and $\Sigma$ the \textbf{underlying manifold};
	\item The image of each incident edge stabilizer, and each intersection $Q \bigcap gHg^{-1}$ for $H \in \mathcal{H}$  is a finite subgroup of $\pi_1^{orb}(\Sigma)$ or a subgroup of $\pi_1^{orb}(B)$ where $B$ is a boundary component of $\Sigma$.
\end{itemize}
\end{definition}
	We refer the readers to \cite{JSJ} for the details on orbifolds. We simply include the following fact here.
\begin{proposition}[{\cite[Proposition~5.2]{JSJ}}]\label{Euler}
	A 2-orbifold is hyperbolic if and only if its Euler characteristic is negative. 
\end{proposition}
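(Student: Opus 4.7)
The plan is to prove both directions by pairing the orbifold Gauss-Bonnet formula with an explicit polygon construction in $\mathbb{H}^2$. The forward direction is essentially a one-line calculation: if $\Sigma$ is hyperbolic, then by definition it admits a Riemannian metric of constant curvature $-1$ (with totally geodesic boundary, if present), and the orbifold Gauss-Bonnet theorem gives
\[
2\pi\,\chi^{orb}(\Sigma) \;=\; \int_\Sigma K\, dA \;=\; -\operatorname{Area}(\Sigma)\;<\;0,
\]
so $\chi^{orb}(\Sigma)<0$.

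For the converse, I would invoke the classification of compact $2$-orbifolds by signature: the homeomorphism type is determined by the underlying surface (genus and orientability), the orders $m_i$ of the cone points, the orders $n_j$ of the corner reflectors, and the number of boundary circles. From this signature $\chi^{orb}(\Sigma)$ is an explicit finite sum, and the signatures with $\chi^{orb}(\Sigma)\geq 0$ (the ``bad'' orbifolds: $S^2$ with at most three cone points of spherical-triangle orders, the disk with few singularities, the Euclidean triangle quotients, the torus, the Klein bottle, and their close relatives) can be enumerated by direct inspection. For every other signature one constructs a hyperbolic structure by building a fundamental polygon in $\mathbb{H}^2$ whose vertex angles match the cone and corner angle data and whose sides are paired by orientation-preserving or reflective hyperbolic isometries; Poincar\'e's polygon theorem then certifies that the side-pairing group is discrete and that the quotient orbifold has the prescribed signature.

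The main obstacle is producing the required polygon for each admissible signature. The decisive feature of $\mathbb{H}^2$ is that hyperbolic polygons can be made to have arbitrarily small total angle, so once the total prescribed angle is strictly less than the Euclidean target — which is exactly the statement $\chi^{orb}(\Sigma)<0$ after rearranging Gauss-Bonnet — there is enough angle ``slack'' to solve the resulting system of trigonometric equations in $\mathbb{H}^2$. A one-parameter continuity argument, varying a free side length from $0$ to $\infty$ and applying the intermediate value theorem to the vertex-angle sum, produces the desired polygon in each case. This geometric realization step is the genuine content of the proposition; the Gauss-Bonnet direction is essentially just the integration.
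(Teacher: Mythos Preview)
Your sketch is a reasonable outline of the standard argument, but there is nothing in the paper to compare it against: the proposition is stated as a citation to \cite[Proposition~5.2]{JSJ} and is not proved in this paper at all. The author explicitly writes ``We refer the readers to \cite{JSJ} for the details on orbifolds. We simply include the following fact here,'' and then states the proposition with no accompanying proof environment. So the paper's ``proof'' is simply an appeal to the literature.

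As for the content of your outline itself, it is the correct standard strategy. The forward direction via orbifold Gauss--Bonnet is fine. For the converse, your description is accurate in spirit but somewhat schematic: the actual construction of the fundamental polygon and the verification of Poincar\'e's theorem across all possible signatures (orientable and non-orientable underlying surfaces, with or without boundary, with arbitrary mixtures of cone points, corner reflectors, and mirror arcs) involves a nontrivial case analysis that you have only gestured at. In particular, the phrase ``a one-parameter continuity argument'' hides the work of choosing the right polygon combinatorics for each signature and checking that the intermediate value argument actually goes through in each case. None of this is wrong, but if you were asked to write a full proof rather than a plan, this is where the genuine effort would lie. For the purposes of this paper, however, the citation suffices and no such proof is expected.
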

	We give the definition of QH vertex groups here since they appear in the canonical JSJ decomposition given by Bowditch. Also, the fact that they fit into a specific kind of short exact sequence guarantees their residual finiteness (\Cref{GOOD}). It is important to keep their images in the Dehn filling residually finite.\par 
	We point out that Bowditch, in \cite{Bow98}, used the word ``cocompact Fuchsian" to mean ``finite-by-cocompact Fuchsian". A group $G$ is \textbf{cocompact Fuchsian} if it is a discrete subgroup of $Isom(\mathbb{H}^2)$ and there is a compact $K \subseteq \mathbb{H}^2$ such that $G \cdot K = \mathbb{H}^2$.
\begin{theorem}[{\cite[Theorem~5.28,Theorem~6.2]{Bow98}}]\label{JSJ}
	Let $G$ be a one-ended hyperbolic group that is not finite-by-cocompact Fuchsian, then $G$ has a canonical JSJ decomposition where the edge groups are two-ended and quasi-convex and the vertex groups are one of the following types:
\begin{itemize}
	\item[1)] maximal two-ended
	\item[2)] QH with finite fiber
	\item[3)] not the first two types
\end{itemize}
Furthermore, type 2) and 3) vertices are never adjacent to a type 2) or 3) vertex, and type 1) vertices are never adjacent to a type 1) vertex.
\end{theorem}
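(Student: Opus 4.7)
Since this is stated with attribution to \cite{Bow98}, the paper itself will presumably just invoke Bowditch's construction rather than reprove it, so my plan is to outline how Bowditch proves it, which is the only approach I know. The key object is the Gromov boundary $\partial G$, which under our hypotheses is a connected, locally connected, metrizable compactum that is not a circle (the circle case is exactly the finite-by-cocompact Fuchsian case we excluded).

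The plan is to extract a $G$-tree directly from the topology of $\partial G$. First I would study cut points and cut pairs of $\partial G$: since $G$ is one-ended, $\partial G$ has no global cut points, but local cut points and cut pairs can exist. On the set of cut pairs one introduces an equivalence relation (roughly, pairs are equivalent if they fail to separate each other in a controlled way) and groups equivalence classes into two types: isolated equivalence classes, which I expect to give the two-ended (type 1) vertex stabilizers; and ``necklaces,'' which are cyclic arrangements of cut pairs whose stabilizers carry a natural Fuchsian structure and will produce the QH (type 2) vertices, with the underlying 2-orbifold $\Sigma$ read off from the combinatorics of the necklace and with finite fiber $F$ coming from the kernel of the action on the necklace. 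The remaining pieces of $\partial G$, whose stabilizers act on $\partial G$ in a rigid (no cut-pair-splitting) fashion, contribute the type 3 vertices. One then assembles all of this into a bipartite pretree, passes to its simplicial completion, and defines $T$ as the resulting $G$-tree; the action is cocompact because $G$ acts cocompactly on its boundary structure.

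Next I would verify the JSJ properties. Quasi-convexity and two-endedness of edge groups follow because edge stabilizers are precisely stabilizers of cut pairs in $\partial G$, and the maximal two-ended subgroups of hyperbolic groups are quasi-convex by \Cref{quasi} together with standard properties of elementary subgroups in hyperbolic groups. Universal ellipticity of edge stabilizers with respect to the class of two-ended splittings follows because any two-ended splitting must respect the cut-pair structure of $\partial G$. To upgrade this to the JSJ property I would take an arbitrary universally elliptic two-ended splitting $T'$ and construct a $G$-equivariant map $T \to T'$ by sending each vertex of $T$ to the (necessarily unique) fixed point or minimal subtree determined by its stabilizer's action on $T'$; necklace vertices map to points because the orbifold relations constrain the image. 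The bipartite adjacency statement (type 1 not adjacent to type 1; types 2, 3 not adjacent to each other) falls out of the construction: type 1 vertices came from cut-pair classes while types 2, 3 came from complementary pieces, so edges always connect one of each.

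The main obstacle, and the heart of Bowditch's work, is the topological/dynamical analysis of $\partial G$ required to define the equivalence relation on cut pairs and to recognize necklaces as producing genuine QH structure with a compact hyperbolic 2-orbifold as underlying manifold (using \Cref{Euler} to see that the orbifold is indeed hyperbolic). A secondary obstacle is ensuring canonicity: one must show that the equivalence relation, and hence the entire tree, is determined by $G$ alone and is invariant under any automorphism of $G$ that preserves its topological boundary. Given how delicate this analysis is, in the context of the present paper I would simply invoke \cite[Theorem~5.28, Theorem~6.2]{Bow98} and proceed.
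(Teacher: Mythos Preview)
Your anticipation is correct: the paper does not prove this theorem at all but simply cites \cite[Theorem~5.28, Theorem~6.2]{Bow98} and moves on, so there is nothing to compare against beyond the citation itself. Your sketch of Bowditch's boundary-based construction (cut pairs in $\partial G$, necklaces versus isolated classes, the bipartite pretree) is an accurate high-level summary of how the cited result is actually proved, and your final sentence---just invoke Bowditch and proceed---is exactly what the paper does. One small quibble: your appeal to \Cref{quasi} for quasi-convexity of two-ended subgroups is misdirected, since that proposition deduces quasi-convexity of vertex groups \emph{from} that of edge groups; the relevant fact (infinite cyclic, hence two-ended, subgroups of hyperbolic groups are quasi-convex) is what the paper cites from \cite[Chapter~III.$\Gamma$, Corollary~3.10]{MNPC} in the remark following Definition~2.9.
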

\begin{remark}
	By \cite[Theorem~9.18]{JSJ} and the sentence following that theorem in \cite{JSJ}, vertex groups of type 3) are universally elliptic.
\end{remark}
\begin{lemma}\label{GOOD}
	If there is a short exact sequence $1 \to F \to Q \to \pi_1^{orb}(\Sigma) \to 1$ where $F$ is finite, $Q$ hyperbolic and $\Sigma$ is a compact hyperbolic 2-orbifold, then $Q$ is residually finite.
\end{lemma}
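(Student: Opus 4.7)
The plan is to pass to a finite-index subgroup of $Q$ in which the extension becomes central, then show that a further finite-index subgroup splits as a direct product, from which residual finiteness of $Q$ follows.

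First I would observe that $\pi_1^{orb}(\Sigma)$ is a discrete subgroup of $\mathrm{Isom}(\mathbb{H}^2)$, hence linear and thus virtually torsion-free by Selberg's lemma. Picking a torsion-free finite-index $\Gamma\leq\pi_1^{orb}(\Sigma)$, and passing to an index-two subgroup if needed, we may take $\Gamma$ to be the fundamental group of a compact orientable hyperbolic surface. Let $\pi:Q\twoheadrightarrow\pi_1^{orb}(\Sigma)$ denote the quotient. The conjugation action $Q\to\mathrm{Aut}(F)$ has finite image since $F$ is finite, so $C_Q(F)$ has finite index in $Q$. Setting $Q':=C_Q(F)\cap\pi^{-1}(\Gamma)$ and $\Gamma':=\pi(Q')$, we have $Q'\cap F=Z(F)$, and $Z(F)\subseteq Z(Q')$ because $Q'\leq C_Q(F)$, so restricting $\pi$ produces a central extension
\[
1\longrightarrow Z(F)\longrightarrow Q'\longrightarrow \Gamma'\longrightarrow 1,
\]
with $Z(F)$ finite abelian and $\Gamma'$ of finite index in $\Gamma$, hence either a free group (if the surface has boundary) or a closed orientable hyperbolic surface group $\pi_1(S_g)$ with $g\geq 2$.

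In the free case, $H^2(\Gamma',Z(F))=0$, so the extension splits and $Q'\cong Z(F)\times\Gamma'$, which is residually finite. In the closed case, the universal coefficient theorem together with $H_1=\mathbb{Z}^{2g}$ free and $H_2=\mathbb{Z}$ yields $H^2(\Gamma',Z(F))\cong Z(F)$, and restriction to a finite-index subgroup of index $n$ corresponds to multiplication by $n$ on $Z(F)$, since a degree-$n$ cover satisfies $p_*[\tilde S_g]=n[S_g]$. Choosing a subgroup $\tilde\Gamma\leq\Gamma'$ of index $n=|Z(F)|$ (available via the abelianization of $\pi_1(S_g)$ surjecting onto $\mathbb{Z}/n$), the pulled-back extension becomes trivial, so its preimage $\tilde Q\subseteq Q'$ is isomorphic to $Z(F)\times\tilde\Gamma$, a direct product of residually finite groups.

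Thus $\tilde Q$ is a finite-index residually finite subgroup of $Q$, and the standard argument that any group containing a finite-index residually finite subgroup is itself residually finite (intersect conjugates to obtain a finite-index normal residually finite subgroup, then separate each nontrivial element either through the finite quotient or through this normal subgroup) finishes the proof. The main technical obstacle will be the closed-surface case: one must pin down $H^2(\pi_1(S_g),Z(F))\cong Z(F)$ together with the degree formula for restriction, both of which follow from Poincar\'{e} duality and $S_g$ being a $K(\pi_1(S_g),1)$. The remaining steps are routine group-theoretic manipulations, so the substantive work is really setting up the reduction so that $F$ becomes central before the cohomological argument is invoked.
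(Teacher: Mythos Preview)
Your argument is correct and takes a genuinely different route from the paper. The paper's proof is a two-line citation: fundamental groups of compact 2-orbifolds are \emph{good} in the sense of Serre, and an extension of a finite group by a good residually finite group is residually finite (equivalently, goodness forces the profinite completion functor to be exact on such short exact sequences, so $Q$ embeds in $\widehat{Q}$). Your approach instead avoids profinite machinery entirely: you reduce to a finite-index central extension of a free or closed surface group by a finite abelian group, compute $H^2$ explicitly, and use the degree formula for restriction along a finite cover to kill the extension class, so that a further finite-index subgroup is literally a direct product.

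What each approach buys: the paper's argument is shorter and immediately generalizes to any good residually finite quotient in place of $\pi_1^{orb}(\Sigma)$, at the cost of importing a nontrivial black box. Your argument is self-contained and elementary (Selberg, universal coefficients, Poincar\'e duality for $K(\pi,1)$ surfaces, and the standard transfer identity $p_*[\tilde S]=n[S]$), but is tailored to the one-dimensional/surface situation and would not extend verbatim to higher-dimensional analogues. Both are perfectly adequate for the lemma as stated; yours has the pedagogical advantage of making explicit \emph{why} the finite kernel can be separated, namely that the obstruction to splitting lives in a finite group on which passing to an index-$|Z(F)|$ cover acts by zero.
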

\begin{proof}
	This is well-known. Fundamental groups of 2-orbifolds are good in the sense of Serre (\cite[Proposition~3.4]{GOODBianchi}), and any extension of a residually finite group by a good group is residually finite. For details, see \cite[Chapter~7]{PPDG}.
\end{proof}
	We would like the quotient of these QH vertex groups to fit into another short exact sequence of the same type so we can apply \Cref{GOOD} again. \Cref{quotient-seq} shows that there is some short exact sequence the quotient group fits in.
\begin{lemma}\label{quotient-seq}
	Let $1 \to F \to G \xrightarrow{\phi} H \to 1$ be a short exact sequence of groups with $F$ finite, $G$ hyperbolic relative to a collection $\mathcal{P} = \{P_1,...,P_k\}$ of finitely generated infinite subgroups. Then, for sufficiently long fillings $\bar{G} = G(N_1,...,N_k)$ of $G$ with respect to $\mathcal{P}$, we have a short exact sequence $1 \to F \to \bar{G} \to \bar{H} \to 1$, where $\bar{H} = H(\phi(N_1),...,\phi(N_k))$.
\end{lemma}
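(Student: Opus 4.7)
The plan is to descend $\phi$ to the quotient, identify the kernel of the descended map, and then invoke Osin's Dehn filling theorem to ensure that finite subgroup $F$ embeds into $\bar{G}$. Concretely, set $K := \lr{\lr{N_1,\dots,N_k}}$ (normal closure in $G$) and $L := \lr{\lr{\phi(N_1),\dots,\phi(N_k)}}$ (normal closure in $H$), so that $\bar{G} = G/K$ and $\bar{H} = H/L$.

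First I would check that $\phi$ descends to a surjective homomorphism $\bar{\phi}:\bar{G} \to \bar{H}$. Since $\phi$ is surjective, every generator $h\phi(N_i)h^{-1}$ of $L$ is the $\phi$-image of $gN_ig^{-1}$ for any lift $g$ of $h$, so $\phi(K) = L$. Hence $\phi$ sends $K$ into $L$ and descends; surjectivity is inherited from $\phi$.

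Next I would compute $\ker(\bar{\phi})$ by diagram chasing. An element $gK \in \bar{G}$ lies in $\ker(\bar{\phi})$ iff $\phi(g) \in L = \phi(K)$, iff $g \in K\cdot \ker(\phi) = KF$. Therefore
\[
\ker(\bar{\phi}) \;=\; KF/K \;\cong\; F/(F\cap K).
\]
So the sequence $1 \to F/(F\cap K) \to \bar{G} \to \bar{H} \to 1$ is exact unconditionally, and the task reduces to arranging $F \cap K = \{1\}$.

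This last point is exactly where the hypothesis on fillings enters. Because $F$ is finite, the set $B := F\setminus\{1\}$ is finite, and the relatively hyperbolic Dehn filling theorem of Osin (\cite[Theorem~1.1]{OsinDF}, in the form used throughout the paper) guarantees that for all sufficiently long fillings of $G$ with respect to $\mathcal{P}$, the quotient map $G \to \bar{G}$ is injective on $B$; equivalently $F \cap K = \{1\}$. Consequently the natural map $F \hookrightarrow \bar{G}$ identifies $F$ with $\ker(\bar{\phi})$, yielding the desired exact sequence $1 \to F \to \bar{G} \to \bar{H} \to 1$. There is no real obstacle beyond noting that Osin's theorem applies directly in the relatively hyperbolic (not just hyperbolic) setting assumed here; the remaining content is the routine third-isomorphism-theorem computation above.
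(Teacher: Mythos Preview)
Your proof is correct and follows essentially the same approach as the paper: use Osin's Dehn filling theorem to make the finite set $F$ inject into $\bar{G}$, then identify the kernel of the induced map by a routine diagram chase. The only cosmetic difference is the direction of the chase---you descend $\phi$ to $\bar{\phi}:\bar{G}\to\bar{H}$ and compute $\ker(\bar{\phi})=KF/K\cong F/(F\cap K)$, whereas the paper instead defines $q:H\to\bar{G}/p(F)$ and computes $\ker(q)=\lr{\lr{\phi(N_1),\dots,\phi(N_k)}}$; these are two sides of the same third-isomorphism-theorem computation.
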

\begin{proof}
	Let $F$ be the finite set that we want to inject into the filled group in the relative hyperbolic Dehn filling theorem. Then, for sufficiently long fillings, $F$ injects into $\bar{G}$ via the quotient map. In the quotient $\bar{G}$, $F$ is still normal so we can talk about the following diagram:\\
\begin{center}
\begin{tikzcd}
1 \arrow[r] & F \arrow[r, "i"] \arrow[d, "p|_F"] & G \arrow[r, "\phi"] \arrow[d, "p"] & H \arrow[r] \arrow[d, "q", dashed] & 1 \\
1 \arrow[r] & p(F) \arrow[r]                        & \bar{G} \arrow[r, "\psi"]          & \bar{G}/p(F) \arrow[r]                & 1
\end{tikzcd}
\end{center}
Here, we define $q: H \to \bar{G}/p(F)$ to be the homomorphism that makes the diagram commute.\\
\begin{itemize}
	\item $q$ well-defined\\
	Let $g,h \in G$ such that $\phi(g) = \phi(h)$. This means $gF = hF$, so $q(\phi(g)) = \psi(p(g)) = p(g)p(F) = p(gF) = p(hF) = p(h)p(F) = \psi(p(h)) = q(\phi(h))$ ($F$ comes out of $p$ because the left square commutes).
	\item $q$ surjective\\
	Let $x \in \bar{G}/p(F)$, then since $\psi$ and $p$ are quotient maps, exists $g \in G$ such that $\psi(p(g)) =x$. Thus, $q(\phi(g))=x$.\\
	\item $ker(q) \supseteq \langle \langle \phi (N_1),...,\phi (N_k) \rangle \rangle$ \\
	Let $h \in \phi(N_1)$, then we have $g \in N_1$ such that $\phi(g)=h$. Thus, $q(h) = \psi (p(g)) = \psi (1) = 1$, meaning $N_1 \subseteq ker(q)$. Similarly, all other images of $N_i$ are in the kernel. Therefore, $ker(q) \supseteq \langle \langle \phi (N_1),...,\phi (N_k)\rangle \rangle$.\\
	 \item $ker(q) \subseteq \langle \langle \phi (N_1),...,\phi (N_k) \rangle \rangle$\\
	 Suppose that there is a $h \in ker(q) - \langle \langle \phi (N_1),...,\phi (N_k) \rangle \rangle$. Let $g \in G$ such that $\phi(g) = h$. Since $\phi(\lr{\lr{N_1,...,N_k}}) \subseteq \lr{\lr{\phi(N_1),...,\phi(N_k)}}$, the element $g$ must not lie in $\lr{\lr{N_1,...,N_k}}$. Thus, $p(g) \neq 1$. Also, $\psi(p(g)) = q(\phi(g)) = q(h) = 1$ by assumption, so $p(g) \in p(F) - \{1\}$. The fact that $p(g) \in p(F)$ implies that there is an element $r \in F$ such that $r^{-1}g \in \langle \langle N_1,...,N_k \rangle \rangle$, so $g \in F \langle \langle N_1,...,N_k \rangle \rangle$ (this is a group since both factors are normal). This further implies that $h=\phi(g)\in \langle \langle \phi (N_1),...,\phi (N_k) \rangle \rangle$, contradiction.
\end{itemize}
The four bullet points above allow us to use the first isomorphism theorem and conclude that $\bar{H} \cong \bar{G}/p(F)$ naturally and fits in the diagram at the right place. The second row of the diagram is then our conclusion.
\end{proof}
	Now, we want to show that the $\bar{H}$ we obtain in \Cref{quotient-seq} can be made to be a 2-orbifold fundamental group, so we can apply \Cref{GOOD}. This is the content of \Cref{orbi-shift}.
\begin{definition}
	Given a boundary subgroup $B$ of a 2-orbifold, the \textbf{essential subgroup} of $B$ is the maximal cyclic subgroup of $B$.
\end{definition}
\begin{remark}
	The boundary of a 2-orbifold is either a circle or an arc bounded by two mirrors. With a circle boundary, the boundary subgroup is $\mathbb{Z}$ so its essential subgroup is itself. For the arc boundary, the boundary subgroup is $D_\infty = \mathbb{Z}/2\mathbb{Z} * \mathbb{Z}/2\mathbb{Z}$. Its essential subgroup is the unique index-two cyclic subgroup.
\end{remark}
\begin{lemma}\label{orbi-shift}
	Let $Q$ be a group with short exact sequence $1 \to F \to Q \to \pi_1^{orb}(\Sigma) \to 1$ where $F$ is finite and $\Sigma$ is a compact hyperbolic 2-orbifold. Let $\mathcal{H}$ be the collection of preimages of the boundary subgroups of $\Sigma$. Let $\mathcal{D} = \{D_1,...,D_n\}$ be the peripheral structure on $Q$ induced by $\mathcal{H}$. Then, for $H$-filling kernels $(N_1,...,N_n)$ with large enough indices that map into essential subgroups of the boundary groups, $Q(N_1,...,N_n)$ fits into a short exact sequence $1 \to F \to \bar{Q} \to \pi_1^{orb}(\Sigma_Q') \to 1$ where $\Sigma_Q'$ is a compact hyperbolic 2-orbifold.
\end{lemma}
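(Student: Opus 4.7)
The plan is to combine \Cref{quotient-seq} with the standard description of orbifold Dehn filling on the level of orbifold fundamental groups. First, since $\Sigma$ is hyperbolic, $\pi_1^{orb}(\Sigma)$ is hyperbolic; since $F$ is finite, $Q$ is hyperbolic as a finite extension. The boundary subgroups of $\pi_1^{orb}(\Sigma)$ are two-ended (each is either $\mathbb{Z}$ or $D_\infty$), so the subgroups in $\mathcal{H}$ are two-ended; thus \Cref{almost} gives that the induced peripheral structure $\mathcal{D}$ is an almost malnormal collection of quasi-convex subgroups, making $Q$ hyperbolic relative to $\mathcal{D}$. Applying \Cref{quotient-seq}, for sufficiently long fillings I obtain a short exact sequence
\[
1 \to F \to \bar{Q} \to \overline{\pi_1^{orb}(\Sigma)} \to 1,
\]
where $\overline{\pi_1^{orb}(\Sigma)} = \pi_1^{orb}(\Sigma)/\lr{\lr{\phi(N_1),\ldots,\phi(N_n)}}$ and $\phi \colon Q \to \pi_1^{orb}(\Sigma)$ is the given quotient map.

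The core step is to realize $\overline{\pi_1^{orb}(\Sigma)}$ as $\pi_1^{orb}(\Sigma_Q')$ for a compact hyperbolic 2-orbifold $\Sigma_Q'$. By hypothesis each $\phi(N_i)$ lies in the essential (cyclic) subgroup of the corresponding boundary subgroup $B_i$; hence $\phi(N_i) = \lr{b_i^{k_i}}$ for some positive integer $k_i$, where $b_i$ generates the essential subgroup, and taking $[D_i:N_i]$ large forces $k_i$ large. I then invoke the standard orbifold Dehn-filling construction to identify the group $\pi_1^{orb}(\Sigma)/\lr{\lr{b_1^{k_1},\ldots,b_n^{k_n}}}$ with $\pi_1^{orb}(\Sigma_Q')$, where $\Sigma_Q'$ is built from $\Sigma$ by capping each circle boundary with a disk carrying a cone point of order $k_i$ and capping each mirrored (arc) boundary with a disk carrying a corner reflector of order $k_i$.

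To conclude I verify that $\Sigma_Q'$ is hyperbolic via \Cref{Euler}. Each cap contributes a positive quantity at most $1/k_i$ (respectively $1/(2k_i)$ in the mirrored case) to the orbifold Euler characteristic, so
\[
\chi(\Sigma_Q') = \chi(\Sigma) + \sum_{i=1}^n \varepsilon_i
\]
with each $\varepsilon_i > 0$ tending to $0$ as $k_i \to \infty$. Since $\chi(\Sigma) < 0$, taking all the indices $[D_i:N_i]$ sufficiently large keeps $\chi(\Sigma_Q') < 0$, so $\Sigma_Q'$ is a compact hyperbolic 2-orbifold; plugging the identification back into the short exact sequence from the first step yields the claim.

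The principal obstacle is the orbifold Dehn-filling identification in the middle step, especially for the mirrored boundaries: the boundary subgroup there is $D_\infty = \lr{s,t \mid s^2, t^2}$, and quotienting by the normal closure of $(st)^{k_i}$ produces the dihedral group $D_{k_i}$; I need to verify that this agrees with the orbifold fundamental group of a disk with a corner reflector of order $k_i$, attached along the arc boundary so that the two mirror walls extend across the cap. Once this standard but somewhat fiddly identification is in hand, the rest is Euler-characteristic bookkeeping.
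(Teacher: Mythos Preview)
Your approach is essentially the same as the paper's: apply \Cref{quotient-seq}, then realize the resulting quotient of $\pi_1^{orb}(\Sigma)$ as an orbifold fundamental group by coning off each circle boundary with a cone point and extending each arc boundary so the two mirrors meet at a corner reflector, and finally check the Euler characteristic via \Cref{Euler}. One small correction in your bookkeeping: capping an arc boundary with a corner reflector of order $k_i$ changes $\chi^{orb}$ by $\frac{1}{2}\bigl(\frac{1}{k_i}-1\bigr)$, which is \emph{negative} rather than a positive quantity bounded by $1/(2k_i)$; this only helps, so the conclusion that $\chi(\Sigma_Q')<0$ for large indices is unaffected.
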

\begin{proof}
	By \Cref{quotient-seq}, for sufficiently long fillings, there is a short exact sequence $1 \to F \to \bar{Q} \to \bar{Q}/F \to 1$ where $\bar{Q}/F = \pi_1^{orb}(\Sigma)/ \lr{\lr{\phi(N_1),...,\phi(N_n)}}$. We realize this quotient of the orbifold fundamental group as the orbifold fundamental group of a new 2-orbifold $\Sigma'$. \\
In the first case where the boundary is a circle, $\phi(N_i) = q_i\mathbb{Z}$ and the quotient at the boundary is $\mathbb{Z}/q_i\mathbb{Z}$. We can realize this by gluing a disk with a conical point of order $q_i$ to the boundary circle. \\

\begin{adjustbox}{center}

\tikzset{every picture/.style={line width=0.75pt}} 

\begin{tikzpicture}[x=0.75pt,y=0.75pt,yscale=-1,xscale=1]

\draw  [color={rgb, 255:red, 0; green, 0; blue, 0 }  ,draw opacity=1 ] (181.5,143.97) .. controls (170.43,143.78) and (160.79,127.97) .. (159.97,108.66) .. controls (159.14,89.34) and (167.44,73.84) .. (178.5,74.03) .. controls (189.57,74.22) and (199.21,90.03) .. (200.03,109.34) .. controls (200.86,128.66) and (192.56,144.16) .. (181.5,143.97) -- cycle ;
\draw    (114.8,75) .. controls (153.8,71) and (129.8,89) .. (178.5,74.03) ;
\draw    (122.8,143) .. controls (164.1,130.03) and (152.8,145) .. (181.5,143.97) ;
\draw    (235,101) -- (331.8,101.98) ;
\draw [shift={(333.8,102)}, rotate = 180.58] [color={rgb, 255:red, 0; green, 0; blue, 0 }  ][line width=0.75]    (10.93,-3.29) .. controls (6.95,-1.4) and (3.31,-0.3) .. (0,0) .. controls (3.31,0.3) and (6.95,1.4) .. (10.93,3.29)   ;
\draw    (353.8,76) .. controls (392.8,72) and (368.8,90) .. (417.5,75.03) ;
\draw    (361.8,144) .. controls (403.1,131.03) and (391.8,146) .. (420.5,144.97) ;
\draw    (417.5,75.03) -- (502.8,105) ;
\draw    (420.5,144.97) -- (502.8,105) ;
\draw    (417.5,75.03) .. controls (395.8,86) and (398.8,152) .. (420.5,144.97) ;
\draw  [dash pattern={on 4.5pt off 4.5pt}]  (417.5,75.03) .. controls (447.8,77) and (441.8,147) .. (420.5,144.97) ;

\draw (510,95) node [anchor=north west][inner sep=0.75pt]   [align=left] {$q_i$};
\end{tikzpicture}
\end{adjustbox}

\centerline{\textbf{FIGURE 1}. CIRCLE BOUNDARY}\leavevmode
\par 
By assumption, the image of $N_i$ that maps to $\mathbb{Z}/2\mathbb{Z}*\mathbb{Z}/2\mathbb{Z}$ is a finite index subgroup the essential subgroup. The quotient at the boundary is a finite dihedral group $\mathcal{D}_{r_i}$ for some $r_i \in \mathbb{N}$. This can be realized by extending the surface past the boundary arc so the two mirrors meet at an angle of $2\pi/r_i$ (consequently creating a corner reflector of order $r_i$).\\

\tikzset{every picture/.style={line width=0.75pt}} 

\begin{adjustbox}{center}

\tikzset{every picture/.style={line width=0.75pt}} 

\begin{tikzpicture}[x=0.75pt,y=0.75pt,yscale=-1,xscale=1]

\draw [color={rgb, 255:red, 74; green, 144; blue, 226 }  ,draw opacity=1 ]   (206.87,69.87) -- (206.87,116.63) ;
\draw    (206.87,116.63) -- (314.04,115.33) ;
\draw [color={rgb, 255:red, 74; green, 144; blue, 226 }  ,draw opacity=1 ]   (314.04,115.33) -- (314.04,71.17) ;
\draw    (342.44,94.55) -- (384.38,93.31) ;
\draw [shift={(386.38,93.25)}, rotate = 178.31] [color={rgb, 255:red, 0; green, 0; blue, 0 }  ][line width=0.75]    (10.93,-3.29) .. controls (6.95,-1.4) and (3.31,-0.3) .. (0,0) .. controls (3.31,0.3) and (6.95,1.4) .. (10.93,3.29)   ;
\draw [color={rgb, 255:red, 0; green, 0; blue, 0 }  ,draw opacity=1 ]   (410.77,97.15) -- (461.41,95.85) ;
\draw [color={rgb, 255:red, 74; green, 144; blue, 226 }  ,draw opacity=1 ]   (409.16,68.57) -- (410.77,97.15) ;
\draw [color={rgb, 255:red, 74; green, 144; blue, 226 }  ,draw opacity=1 ]   (461.41,71.17) -- (461.41,95.85) ;
\draw [color={rgb, 255:red, 74; green, 144; blue, 226 }  ,draw opacity=1 ]   (461.41,95.85) .. controls (465.43,121.83) and (437.29,116.63) .. (438.63,130.92) ;
\draw [color={rgb, 255:red, 74; green, 144; blue, 226 }  ,draw opacity=1 ]   (410.77,97.15) .. controls (407.82,124.43) and (438.63,114.03) .. (438.63,130.92) ;

\draw (187.3,45.14) node [anchor=north west][inner sep=0.75pt]   [align=left] {Mirror};
\draw (294.48,50.33) node [anchor=north west][inner sep=0.75pt]   [align=left] {Mirror};
\draw (433.07,130.11) node [anchor=north west][inner sep=0.75pt]    {$r_{i}$};

\end{tikzpicture}

\end{adjustbox}

\centerline{\textbf{FIGURE 2}. ARC BOUNDARY}\leavevmode
\par
By applying the two above constructions, we obtain a new compact 2-orbifold $\Sigma'$ which $\Sigma$ embeds in. Adding a disk with a cone point of order $q_i$ increases the Euler characteristic by $1/q_i$, so we can make each of these arbitrarily small by making sure the index of $N_i$ is large enough. Each corner reflector of order $r_i$ contributes $\frac{1}{2}(\frac{1}{r_i}-1)$ to the Euler characteristic, which is negative. Since we started with an orbifold of negative Euler characteristic, we can make sure that  $\Sigma'$ has negative Euler characteristic, and thus hyperbolic by \Cref{Euler}.\par 
Now, by Van Kampen's theorem, $\pi_1(\Sigma') \cong \pi_1^{orb}(\Sigma)/\lr{\lr{\phi(N_1),...,\phi(N_k)}}$.
\end{proof}
	The JSJ decomposition of $G$ may have multiple QH vertex groups. Fixing one QH vertex group $Q$, \Cref{orbi-shift} says that $Q$ has residually finite images in some cofinal family of fillings of $G$. Now, we want to show that these families for different QH vertex groups intersect to give another cofinal family. In other words, there are plenty of quotients where all QH vertices have residually finite images.
\begin{lemma}\label{QH}
	Let $G$ be a one-ended hyperbolic group and $(\Gamma,G)$ its canonical JSJ-decomposition. Let $\mathcal{P}$ be a peripheral structure induced by the edge groups, $\mathcal{E}$, of $(\Gamma,G)$. For each vertex group $G_v$, induce a peripheral structure $D_v$ using $Inc|_{G_v}$. Then, there is a cofinal family of finite index filling kernels for $\mathcal{P}$ such that the induced filling kernels on each QH vertex satisfies the conditions of \Cref{orbi-shift}.
\end{lemma}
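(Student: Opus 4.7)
The approach is to rephrase the two conditions of \Cref{orbi-shift} as a finite-index subgroup condition on each $N_j \trianglelefteq P_j$, and then invoke residual finiteness of the virtually cyclic group $P_j$ to produce the cofinal family.

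First I would set up the essential subgroup conditions at the vertex level. For each QH vertex $Q = G_v$ with short exact sequence $1 \to F \to Q \to \pi_1^{orb}(\Sigma_Q) \to 1$ and each $D \in D_v$, note that $D$ is the commensurator in $Q$ of some incident edge group $G_e$ and that, by the definition of QH, the image of $D$ in $\pi_1^{orb}(\Sigma_Q)$ lies in a boundary subgroup $B$. Let $D^{ess} \leq D$ denote the preimage of the essential subgroup of $B$; this has index $1$ or $2$ in $D$ according as $B$ is of circle or arc type. A brief check shows $D = G_v \cap P_e$, where $P_e := \operatorname{Comm}_G(G_e)$ is conjugate to a unique $P_j \in \mathcal{P}$ via some fixed $c_e \in G$, and $D$ has finite index in $P_e$.

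Next I would push the essential conditions down to $P_j$. For each edge $e$ of $\Gamma$ incident to a QH vertex, let $P_{j,e}^{ess} := c_e^{-1} D^{ess} c_e$, which is a finite-index subgroup of $P_j$ since $D^{ess}$ has finite index in $D \subseteq P_e = c_e P_j c_e^{-1}$. Define
\begin{equation*}
	P_j^{\star} \; := \; \bigcap_{e} P_{j,e}^{ess},
\end{equation*}
where $e$ ranges over the finitely many edges incident to a QH vertex whose commensurator $P_e$ is conjugate to $P_j$. Then $P_j^{\star}$ still has finite index in $P_j$, and whenever $N_j \subseteq P_j^{\star}$ one has $c_e N_j c_e^{-1} \subseteq D^{ess}$ for every relevant $(Q, e)$; in particular, the induced filling kernel $c_e N_j c_e^{-1} \cap D$ on $D$ lies in $D^{ess}$, which is exactly the essential subgroup condition of \Cref{orbi-shift}.

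Finally I would build the cofinal family $S$ as the collection of tuples $(N_1, \ldots, N_n)$ with each $N_j$ a finite-index normal subgroup of $P_j$ contained in $P_j^{\star}$ and with $[P_j : N_j]$ large enough that the Euler characteristic estimate in the proof of \Cref{orbi-shift} keeps each modified orbifold hyperbolic. To check cofinality, given any finite $B \subseteq G \setminus \{1\}$, use that $P_j$ is two-ended (hence virtually cyclic and residually finite) to choose $N_j$ as the normal core in $P_j$ of a sufficiently deep finite-index subgroup of $P_j^{\star}$ that avoids $B \cap P_j$ and has index large enough to satisfy the Euler characteristic estimate; the resulting tuple lies in $S \cap C_B$. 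The main technical point is the conjugation bookkeeping defining $P_j^{\star}$, which must simultaneously encode the essential condition for every relevant conjugate of $P_j$; this is handled by taking a finite intersection, finite because $\Gamma$ has finitely many edges.
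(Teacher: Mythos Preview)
Your argument is correct and follows essentially the same idea as the paper: translate the essential-subgroup requirement at each QH vertex into a finite-index constraint on the peripheral groups $P_j$, then use that the $P_j$ are two-ended to produce arbitrarily deep normal subgroups meeting all constraints. The paper organizes this as a three-pass procedure (first find good filling kernels inside each $Q$, then lift them to $G$-filling kernels, then intersect over the QH vertices), whereas you aggregate the constraints once into the single finite-index subgroup $P_j^{\star}$ and read off the cofinal family directly; your packaging is a bit cleaner but the content is the same.

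One small point worth making explicit: when you pass from your chosen conjugator $c_e$ (with $P_e = c_e P_j c_e^{-1}$) to the conjugator $c_i$ used in the definition of the induced filling kernel $K_i = c_i N_j c_i^{-1} \cap D_i$, you need $c_e N_j c_e^{-1} = c_i N_j c_i^{-1}$. This holds because $D_i$ is infinite and contained in both $c_e P_j c_e^{-1}$ and $c_i P_j c_i^{-1}$, so almost malnormality of $\mathcal{P}$ (\Cref{almost}) forces $c_i^{-1} c_e \in P_j$, and then normality of $N_j$ in $P_j$ gives the equality. With that check in place your proof goes through.
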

\begin{proof}
	Let $F_1,...,F_k$ be the collection of fibers of QH vertex groups. Let $F = F_1 \bigcup ... \bigcup F_k$ be the set we want to embed into the filling. By \Cref{quotient-graph}, for sufficiently long $\mathcal{E}$-fillings, the filled group has a splitting with the same graph $\Gamma$ with each vertex group and edge group corresponding quotients.
	Take an arbitrary QH vertex group $Q$ with fiber $F_Q$ and quotient map $\psi: Q \twoheadrightarrow Q/F_Q = \pi_1^{orb}(\Sigma_Q)$. We would like to use \Cref{orbi-shift}. However, we are filling the big group $G$ now, so we want to show that we can induce the fillings realized in \Cref{orbi-shift} from the fillings of $G$. Let $\mathcal{D} = \{D_1,...,D_n\}$ be a peripheral structure induced by incident edge groups on $Q$. The filling of $G$ induces a filling of $Q$ by \cite[Proposition~4.4]{QCERF}.\par 
	Firstly, we show that filling kernels within $Q$ that satisfies the requirements of \Cref{orbi-shift} are plenty. The only concern is the arc boundary. We would like kernels $N_i$ whose image under $\psi$ is inside the essential subgroup. We start with any kernel $N_i$ with large enough index so we can ensure \Cref{quotient-seq} holds. Now, $N_i$ is infinite and $F$ is finite so $\psi(N_i)$ is infinite. Therefore, $\psi(N_i)$ will have finite index in the boundary subgroup since the boundary subgroup is two-ended. So, the intersection of $\psi(N_i)$ with the essential subgroup (call it $E$) has finite index in $E$. We look at the preimage of this intersection, $\psi^{-1}(\psi(N_i) \bigcap E)$, which is infinite, so it has finite index in $D_i$ as $D_i$ is two-ended. Therefore, we can get a finite index normal subgroup of $D_i$ by passing to a subgroup of $\psi^{-1}(\psi(N_i) \bigcap E)$. Note, this is also a subgroup of $N_i$. Thus, starting from a nice enough filling kernel $N_i$ that avoids some bad finite set $B$, we can obtain the correct filling kernel for the requirements of \Cref{orbi-shift} by passing to a subgroup of $N_i$. It will also avoid $B$. Combining this process for all $D_i$, we obtain a cofinal family of filling kernels $S'$ for \Cref{orbi-shift} to work on $Q$. Next, we show that we can induce these kernels from filling $G$.\par 
	Take $(K_1,...,K_n) \in S'$. Then, $K_1 \trianglelefteq D_1 \subseteq c_1P_{j_1}c_1^{-1}$ for some $c_1 \in G$ and $P_{j_1} \in \mathcal{P}$. Now, $K_1$ has finite index in two-ended group $D_1$, so it is infinite, and $P_{j_1}$ is also two-ended, so $c_1^{-1}K_1c_1$ has finite index in it. Thus, by passing to a further finite index subgroup to ensure normality, we get a candidate filling kernel $N_{1,j_1} \trianglelefteq P_{j_1}$. Do the same for all $K_i$ and taking intersections if multiple candidates land in the same $P_j$. For each $P_j$ that is not involved after the process ends, we can pick any filling kernel that is allowed in $C_{B_1}$. Now, the induced filling of $Q$ works as desired. Now, for each choice of filling kernels in $S'$, we obtain finite index filling kernels for $(G,\mathcal{P})$. This shows that we get a cofinal family of filling kernels for each QH vertex so \Cref{orbi-shift} applies. This family is cofinal since we can start with some filling kernels in $\mathcal{P}$ avoiding some finite set $V$ and then check the induced filling kernels induced on $Q$. Go through the process in the previous paragraph to get filling kernels in $S'$. Now, the process in this paragraph gives us desirable filling kernels in $C_V$.\par
	Now, we have picked a few cofinal families of filling kernels $S_1,...,S_k$, each making one of the QH vertex retain a nice short exact sequence in the quotient. It remains to show that $S = S_1 \bigcap S_2 \bigcap ... \bigcap S_k$ is a cofinal family of filling kernels. Let $(N_{i,1},...N_{i,n}) \in S_i$ and consider the kernels in $P_1$. By our construction, $N_{i,1}$ are normal with finite index in $P_1$, so we simply take their intersection and get a normal subgroup of finite index $M_1 \trianglelefteq P_1$. The image of $M_1$ in each orbifold group associated to the QH vertices where a conjugate of $P_1$ is attached will be a subgroup of the essential subgroup after passing to a further finite index if necessary. The same procedure gives us $(M_1,...,M_n)$, and this is an element in $S$. We have described a procedure to produce an element of $S$ from a choice of elements of $S_1,...,S_k$. Given some finite set $D$, each $S_i$ intersects $C_D$ nontrivially, so we can pick one element from each $S_i$. Now, going through the procedure we just described, we get an element of $S$. Since the procedure only involves intersection, so the new element we created is still in $C_D$, meaning $S \bigcap C_D \neq \emptyset$. 
\end{proof}
\section{Main Theorem}
\Tech*
\begin{proof}[Proof of \Cref{tech}]
	Let $G$ be a one-ended hyperbolic group that is not finite-by-cocompact Fuchsian. By \Cref{JSJ}, there is a finite graph of groups $(\Gamma,G)$ with three kinds of vertices. Let $\mathcal{E}$ be the collection of edge groups. Let $\mathcal{P} = \{P_1,...,P_m\}$ be a peripheral structure induced by the edge groups and $F_1,...,F_k$ be the collection of fibers of QH vertex groups. Let $F = \{1,g\} \bigcup F_1 \bigcup ... \bigcup F_k$ be the set we want to embed into the filling. By \Cref{quotient-graph}, for sufficiently long $\mathcal{E}$-fillings, the filled group has a splitting with the same graph $\Gamma$ with each vertex group and edge group corresponding quotients. Notice that for any finite index fillings, the two-ended vertex groups become finite, so do the type 1) vertex groups. In this paragraph, from \Cref{fill}, we obtain a finite bad set $B_1$ such that all desired properties described in this paragraph are present in the quotient if we pick filling kernels from $C_{B_1}$.\par 
	By \Cref{QH}, we have a cofinal familiy of finite index filling kernels $S$ making all QH-vertices satisfy the requirements of \Cref{orbi-shift}. Then, \Cref{GOOD} ensures the quotient of the QH groups are residually finite.\par 
	Thus, we have some $C_{B_1}$ with finite $B_1 \subseteq G \setminus \{1\}$, that gives fillings where two-ended vertex groups becomes finite. We also have a cofinal family of finite index filling kernels $S$ making all QH-vertices residually finite in the fillings. Of course, in all of these fillings, the image of $g$ is nontrivial. Let $T=S \bigcap C_{B_1} \neq \emptyset$ (the intersection is non-empty by definition). Take any finite subset $E \subseteq G \setminus \{1\}$. Then, $T \bigcap C_E = S \bigcap C_{B_1} \bigcap C_E = S \bigcap C_{{B_1}\bigcup E} \neq \emptyset$. So, $T$ is a cofinal family of finite index filling kernels that will give fillings satisfying all three properties we propose in the statement of the theorem.
\end{proof}
	Conditions in the following proposition matches up with the conditions in \cite{Das22} and allow us to utilize \cite[Theorem~1.2]{Das22}.
\begin{proposition}[{\cite[Chapter~2,Corollary~4.14]{JSJ}}]
	A vertex group $G_v$ in a JSJ $(G,\mathcal{A},\mathcal{H})$-tree is a universally elliptic vertex group if and only if it fixes a point on every $(G,\mathcal{A},\mathcal{H}\bigcup Inc|_v)$-tree.
\end{proposition}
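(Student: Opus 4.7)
The plan is to address both directions directly, with the forward implication being essentially trivial and the reverse implication requiring a blow-up of the JSJ tree. The forward direction follows because every $(G, \mathcal{A}, \mathcal{H}\cup Inc|_v)$-tree is in particular a $(G, \mathcal{A}, \mathcal{H})$-tree (the extra family $Inc|_v$ only adds further ellipticity requirements), so a subgroup that fixes a point in every tree of the larger class automatically fixes a point in every tree of the smaller class. I would dispatch this in one sentence.

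For the reverse implication I argue by contrapositive. Suppose $G_v$ fails to be universally elliptic, witnessed by a $(G, \mathcal{A}, \mathcal{H})$-tree $T'$ in which $G_v$ has no fixed point; let $Y \subseteq T'$ denote the (unique, nonempty) minimal $G_v$-invariant subtree. Let $T$ be the ambient JSJ tree in which $G_v$ stabilizes the vertex $v$. I construct a new tree $T''$ by equivariantly blowing up the $G$-orbit of $v$ in $T$, replacing each translate of $v$ by the corresponding translate of $Y$. To specify how edges of $T$ incident to $v$ reattach inside $Y$, observe that every incident edge stabilizer $G_e \in Inc|_v$ lies in $\mathcal{A}$ and is universally elliptic (since $T$ is a JSJ tree), so $G_e$ fixes some point $p \in T'$; because $G_e \leq G_v$ and $Y$ is $G_v$-invariant, the closest-point projection $\pi \colon T' \to Y$ is $G_e$-equivariant and $\pi(p)$ is $G_e$-fixed. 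I attach one representative $e$ from each $G$-orbit of incident edges to $\pi(p) \in Y$ and extend by $G$-equivariance.

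It remains to verify that $T''$ is a $(G, \mathcal{A}, \mathcal{H}\cup Inc|_v)$-tree and that $G_v$ still acts without a fixed point on it. Every edge stabilizer of $T''$ is either an edge stabilizer of $T$ or of $T'$, both in $\mathcal{A}$. A subgroup in $\mathcal{H}$ fixes a vertex of $T$; if that vertex lies outside the orbit of $v$ it survives in $T''$, and otherwise the same projection argument places a fixed point inside the relevant translate of $Y$. Each subgroup in $Inc|_v$ still stabilizes its unchanged corresponding edge of $T$ inside $T''$, so it is elliptic. Finally, $G_v$ has no fixed point inside $Y$ by choice of $Y$, and any hypothetical $G_v$-fixed point of $T'' \setminus Y$ would project to a $G_v$-fixed point in $Y$ by $G_v$-equivariance and uniqueness of projection, a contradiction. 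I anticipate the main obstacle to be the careful equivariant bookkeeping of the blow-up, namely choosing attaching data consistently across all $G$-orbits and confirming that the resulting object is a tree rather than a graph with cycles; this is standard Bass–Serre / Guirardel–Levitt refinement machinery, and the crucial computational content is precisely the uniqueness and equivariance of closest-point projection onto a subtree, which drives both the verification of ellipticity and the absence of a new fixed point.
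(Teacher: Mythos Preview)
The paper does not actually prove this proposition; it is stated with a citation to \cite[Chapter~2, Corollary~4.14]{JSJ} and used as a black box. So there is no in-paper argument to compare against.

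Your blow-up construction is correct, and the verifications you sketch all go through. However, the entire reverse implication is far simpler than you make it, and in fact you already write down the key observation without exploiting it. You note that every $G_e \in Inc|_v$ is universally elliptic because $T$ is a JSJ tree, hence each such $G_e$ fixes a point in the given $(\mathcal{A},\mathcal{H})$-tree $T'$. But this says precisely that $T'$ is \emph{already} a $(G,\mathcal{A},\mathcal{H}\cup Inc|_v)$-tree: the extra ellipticity requirements for $Inc|_v$ are automatically satisfied in every $(\mathcal{A},\mathcal{H})$-tree. Thus the two classes of trees coincide, and both implications become tautologies. No refinement, no minimal subtree $Y$, no attaching data, and no bookkeeping about projections is needed.

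So your argument is not wrong, just substantially overbuilt. The blow-up machinery you invoke is exactly what one needs when the incident edge groups are \emph{not} known to be elliptic in the target tree and must be forced to be; here that is free by the definition of a JSJ tree.
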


\begin{remark}
We note that ``elementary" in the following theorem of Dasgupta's means a larger collection of edge groups than our ``elementary" since the other author is interested in elementary subgroups in a relatively hyperbolic group. However, by simply specializing to our situation where the group is hyperbolic and the peripheral structure only contains two-ended groups, the two notions are the same. Also, by taking finite index filling kernels in our situation, the filling is $\mathcal{M}$-finite in the sense of the following theorem. 
\end{remark}

\begin{theorem}[{\cite[Theorem~1.2]{Das22}}]\label{rigid}
	Let $(G,\mathcal{P})$ be a relative hyperbolic group pair that admits no nontrivial elementary splitting relative to $\mathcal{P}$, then for sufficiently long $\mathcal{M}$-finite fillings $\bar{G}$ relative to $\mathcal{P}$, $\bar{G}$ admits no nontrivial elementary splitting relative to $\bar{\mathcal{P}}$.
\end{theorem}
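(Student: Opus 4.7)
The natural approach is by contradiction, combining a Bestvina--Paulin style degeneration argument with relative Rips--Sela theory. Assume the conclusion fails; then there is a sequence of $\mathcal{M}$-finite filling kernels $(N_1^{(n)},\ldots,N_k^{(n)})$ avoiding exhaustively larger finite subsets of $G\setminus\{1\}$, such that each filling $\bar G_n = G(N_1^{(n)},\ldots,N_k^{(n)})$ admits a nontrivial elementary splitting relative to $\bar{\mathcal P}_n$. For each $n$, the Bass--Serre tree $T_n$ of this splitting is a minimal $\bar G_n$-tree with elementary edge stabilizers on which each $\bar P_i$ is elliptic; pulling back through $G\twoheadrightarrow\bar G_n$ yields a $G$-action on $T_n$ in which every $P_i$ is elliptic, every edge stabilizer is elementary, and the kernel of the filling acts trivially.

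Next, fix a finite generating set $S\subseteq G$, pick basepoints $x_n\in T_n$ minimizing $\max_{s\in S}d_{T_n}(x_n,sx_n)$, and rescale $T_n$ so that this maximum is $1$. Using a relatively hyperbolic Bestvina--Paulin type compactness theorem for sequences of actions, pass to a subsequence and extract an equivariant Gromov--Hausdorff limit action of $G$ on a nondegenerate $\mathbb R$-tree $T_\infty$ in which each $P_i$ remains elliptic. Elementary arc stabilizers in the $T_n$ force arc stabilizers of $T_\infty$ to be limits of elementary subgroups, and the action is stable in the Rips--Sela sense.

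Finally, feed $T_\infty$ into the Guirardel--Levitt relative structure theorem for stable $\mathbb R$-tree actions: $T_\infty$ decomposes as a graph of actions whose simplicial edges give, after collapsing axial and surface pieces appropriately, a nontrivial simplicial $G$-tree with elementary edge stabilizers relative to $\mathcal P$; surface pieces themselves can be cut along essential simple closed curves to yield further two-ended splittings when needed. Either outcome contradicts the rigidity of $(G,\mathcal P)$, completing the contradiction.

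The principal obstacle is ensuring that the limit action is \emph{nontrivial}, i.e.\ has no global fixed point. Even though each $\bar G_n$-action is nontrivial, the rescaling needed to pass to the limit could in principle collapse the tree once one lifts back to $G$, since the filling kernels kill more and more elements of $G$. The $\mathcal{M}$-finiteness hypothesis, together with the fact that the filling kernels avoid an exhaustive family of finite sets, must be exploited to guarantee that enough elements of $S$ have controlled translation lengths in $T_n$ so the rescaled limit is nondegenerate; establishing this quantitatively and simultaneously verifying that limit edge stabilizers remain elementary in a way compatible with the Rips--Sela toolkit is the technical heart of the argument.
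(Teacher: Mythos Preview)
The paper does not prove this theorem; it is quoted verbatim from \cite[Theorem~1.2]{Das22} and used as a black box in the proof of Proposition~\ref{cor}. There is therefore no proof in the paper to compare your proposal against.

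That said, your outline is the standard strategy for results of this type and is, in broad strokes, the approach taken in \cite{Das22}: a Bestvina--Paulin limiting argument produces a limiting $\mathbb{R}$-tree action from a sequence of bad fillings, and the Rips/Guirardel--Levitt machine then converts that action into a forbidden splitting of $(G,\mathcal{P})$. You have also correctly identified the real difficulty, namely showing that the limit action is nontrivial and that arc stabilizers remain elementary; in Dasgupta's argument these are handled using the geometry of the cusped space and the specific control one has over $\mathcal{M}$-finite fillings. Your sketch is plausible at the level of a roadmap, but the steps ``arc stabilizers of $T_\infty$ are limits of elementary subgroups'' and ``the action is stable'' are assertions rather than arguments, and a genuine proof requires substantial work at exactly those points. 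If you intend this as a self-contained proof rather than a citation, those two claims need to be justified carefully; otherwise, citing \cite{Das22} as the paper does is the appropriate move.
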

\begin{proposition}\label{cor}
	Let $G$ be a one-ended hyperbolic group and $g\in G$ nontrivial. Then, there is a homomorphism $\phi:G \to \bar{G}$ such that $\phi(g)$ is nontrivial and $\bar{G}$ admits a finite graph of groups decomposition with finite edge groups and vertex groups are either residually finite or rigid hyperbolic groups.
\end{proposition}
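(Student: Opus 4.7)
The plan is to split off the finite-by-cocompact Fuchsian case and otherwise combine Theorem \ref{tech} with Dasgupta's Theorem \ref{rigid} applied to each type 3 vertex group of the canonical JSJ decomposition.

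If $G$ is finite-by-cocompact Fuchsian, then Lemma \ref{GOOD} (applied to the short exact sequence expressing $G$ as a finite extension of an orbifold group) shows that $G$ is already residually finite, so we can take $\bar G$ to be any finite quotient in which $g$ survives, together with the trivial one-vertex graph of groups. Assume therefore that $G$ is not finite-by-cocompact Fuchsian. Theorem \ref{JSJ} supplies the canonical JSJ decomposition $(\Gamma,G)$, and Theorem \ref{tech} gives a cofinal family $T$ of finite-index filling kernels for the induced peripheral structure $\mathcal{P}$ such that any filling from $T$ yields $\bar G$ with a splitting over $\Gamma$ having finite edge groups, type 1 vertex groups (maximal two-ended) collapsing to finite quotients, and type 2 (QH) vertex groups having residually finite quotients. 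The only remaining task is to arrange that the image of every type 3 (universally elliptic) vertex group is rigid hyperbolic.

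Fix a type 3 vertex $v$ and let $\mathcal{D}_v$ denote the peripheral structure on $G_v$ induced by $Inc|_v$. By the Proposition preceding Theorem \ref{rigid}, universal ellipticity of $G_v$ is exactly the statement that $(G_v,\mathcal{D}_v)$ admits no nontrivial elementary splitting relative to $\mathcal{D}_v$; hence Theorem \ref{rigid} provides a finite set $B_v\subseteq G_v\setminus\{1\}$ such that any finite-index filling of $(G_v,\mathcal{D}_v)$ whose kernels avoid $B_v$ produces a quotient $\bar G_v$ with no nontrivial elementary splitting relative to $\bar{\mathcal{D}_v}$. Because finite-index fillings render $\bar{\mathcal{D}_v}$ a collection of finite groups and finite subgroups always fix a vertex in any action on a tree, any elementary splitting of $\bar G_v$ is automatically $\bar{\mathcal{D}_v}$-relative, so the conclusion upgrades to $\bar G_v$ being rigid; hyperbolicity of $\bar G_v$ follows from quasi-convexity in the hyperbolic group $\bar G$ via Proposition \ref{quasi} and Theorem \ref{fill}.

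To propagate Dasgupta's condition from each $G_v$ back to $G$, I imitate the pullback technique of Lemma \ref{QH}. For each $D_i\in\mathcal{D}_v$, fix $c_i\in G$ and $P_{j_i}\in\mathcal{P}$ with $D_i\subseteq c_iP_{j_i}c_i^{-1}$; then the induced filling kernel $K_i = D_i\cap c_iN_{j_i}c_i^{-1}$ on $G_v$ avoids $B_v$ whenever $N_{j_i}$ avoids the finite set $c_i^{-1}(D_i\cap B_v)c_i\subseteq P_{j_i}$, and the finite-index condition is preserved because $D_i$ has finite index in $c_iP_{j_i}c_i^{-1}$ by maximality of $P_{j_i}$. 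Collecting these pulled-back sets over all type 3 vertices and all $i$ yields a finite $B^{\ast}\subseteq G\setminus\{1\}$; then $T\cap C_{B^{\ast}}$ is a non-empty cofinal family of finite-index filling kernels, and any filling drawn from it gives the required $\phi:G\to\bar G$. The main obstacle is precisely this bookkeeping step showing that Dasgupta's bad sets for every type 3 vertex can be simultaneously avoided by a single choice of filling kernels for $G$; it is modeled on Lemma \ref{QH} and is not conceptually harder than that argument.
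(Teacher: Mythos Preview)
Your proof is correct and follows essentially the same route as the paper: handle the finite-by-cocompact Fuchsian case separately, then combine the cofinal family from Theorem~\ref{tech} with Theorem~\ref{rigid} applied to each type~3 vertex, using that finite peripherals are automatically elliptic to upgrade relative rigidity to absolute rigidity. The only cosmetic differences are that the paper takes $\bar G=G$ in the Fuchsian case (rather than a further finite quotient), and that where you carry out the pullback of the bad sets $B_v$ explicitly, the paper simply invokes the cofinality of $T$ to assert that the additional ``sufficiently long'' constraints from Theorem~\ref{rigid} can be met simultaneously.
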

\begin{proof}
	If $G$ is a finite-by-cocompact Fuchsian group, then we take $\bar{G} = G$ with the trivial graph of groups decomposition since $G$ is already residually finite. So, let $G$ be a one-ended hyperbolic group that is not finite-by-cocompact Fuchsian. By \Cref{JSJ}, there is a finite graph of groups with three kinds of vertices. Let $P_1,...,P_n$ be the peripheral structure induced by the edge groups. Now, applying \Cref{tech} with respect to $P_1,...,P_n$, there is a cofinal family of finite index fillings $\bar{G}_n$ such that the conclusions of \Cref{tech} holds. The universally elliptic vertex groups, by \Cref{rigid}, for sufficiently long finite index fillings, become hyperbolic groups that admits no nontrivial splitting relative to the incident edge groups, but now all edge groups are finite, so they act elliptically on the associated Bass-Serre tree if there is any splitting (relative to an empty collection), so indeed the universally elliptic vertex groups become rigid after passing to the quotient. These two effects can happen in the same quotient by the definition of a cofinal family of finite index filling kernels. 
\end{proof}
	The following theorem of Dunwoody's reduces the case of infinite-ended hyperbolic groups to one-ended case.
\begin{theorem}[{\cite[Theorem~5.1]{Dun85}}]\label{access}
	A finitely presented group $G$ has more than one end if and only if $G$ admits a finite graph of groups decomposition with finite edge groups and no infinite-ended vertex groups.
\end{theorem}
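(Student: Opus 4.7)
The plan is to handle the two directions separately. For the \emph{if} direction, suppose $G$ admits a non-trivial finite graph of groups decomposition $(\Gamma,G)$ with finite edge groups and no infinite-ended vertex groups; the trivial one-vertex case is vacuous, since then the sole vertex group is $G$ itself, which is at most one-ended. I would pass to the Bass-Serre tree $T$ of the decomposition: the action of $G$ on $T$ is minimal with finite edge stabilizers and no global fixed point. A direct ends count — removing a finite edge stabilizer from the Cayley graph of $G$ disconnects it into infinite pieces indexed by the complementary half-trees of $T$ — forces $G$ to have more than one end.

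For the \emph{only if} direction, suppose $G$ has more than one end. The approach is an iterative splitting procedure. I would first apply Stallings' ends theorem to produce a non-trivial one-edge splitting of $G$ over a finite subgroup (an amalgam $A \ast_C B$ or HNN extension $A \ast_C$). If every resulting vertex group has at most one end, the process halts and yields the desired decomposition. Otherwise, I would apply Stallings again to an offending vertex group and refine the current graph of groups, iterating this step until no vertex group has more than one end.

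The main obstacle — and the heart of Dunwoody's theorem — is proving that this refinement terminates after finitely many steps in the finitely presented setting. The standard approach uses tracks on a finite presentation $2$-complex $K$ for $G$ with universal cover $\tilde{K}$: splittings of $G$ over finite subgroups are encoded by $G$-invariant families of pairwise disjoint essential tracks in $\tilde{K}$ (properly embedded $1$-submanifolds transverse to the cell structure, each separating $\tilde{K}$ into two deep components), and successive refinement of the splitting corresponds to enlarging such a family. The technical heart is a combinatorial bound asserting that any collection of pairwise disjoint, pairwise non-parallel essential tracks on the finite complex $K$ has cardinality bounded by a linear function of the number of $2$-cells of $K$. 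Lifted to the equivariant setting, this caps the complexity of any refinement, which is what forces termination; establishing this combinatorial bound on non-parallel disjoint tracks is the hard step, while the surrounding framework (Stallings' theorem, Bass-Serre theory, and the iteration itself) is routine.
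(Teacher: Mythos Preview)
The paper does not prove this statement; it is quoted from \cite[Theorem~5.1]{Dun85} and invoked as a black box in the proof of \Cref{main}, so there is no in-paper argument to compare your sketch against. Your outline --- Stallings' theorem to produce a splitting over a finite subgroup, iterated refinement, and termination via Dunwoody's bound on disjoint non-parallel tracks in a finite presentation $2$-complex --- is the standard proof of accessibility and is correct in its broad strokes.

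One minor point on the \emph{if} direction: the hypothesis only rules out \emph{infinitely}-ended vertex groups, so in the trivial one-vertex case $G$ could be zero-, one-, or two-ended, not merely ``at most one-ended'' as you write. If $G$ is zero- or one-ended the \emph{if} direction as literally stated fails (take $G=\mathbb{Z}^2$ with the trivial decomposition), so the equivalence should be read with an implicit nontriviality assumption on the splitting. This is harmless here: the paper only uses the forward direction, applied to an infinite-ended $G$.
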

\Main*

\begin{proof}[Proof of \Cref{main}]
	We prove the contrapositive: if all rigid hyperbolic groups are residually finite, then all hyperbolic groups are residually finite. \par 
	Let $G$ be a hyperbolic group. It is easy to see that the zero-ended and two-ended hyperbolic groups are residually finite. For the infinite-ended case, we apply \Cref{access} and \Cref{quasi} to see that it splits into a finite graph of groups with finite edge groups and hyperbolic vertex groups that are not infinite-ended. By \Cref{RF}, we reduce to checking one-ended hyperbolic groups that are not finite-by-cocompact Fuchsian groups (finite-by-cocompact Fuchsian groups are residually finite by \Cref{GOOD}). \par 
	Let $G$ be a one-ended hyperbolic group that is not finite-by-cocompact Fuchsian and $g \in G$ nontrivial, then \Cref{cor} says that we have a map $\phi:G \to \bar{G}$ with $\phi(g)$ nontrivial and $\bar{G}$ satisfies the conditions of \Cref{RF} by our assumption that all rigid hyperbolic groups are residually finite. Thus, $\bar{G}$ is residually finite, so there is a further map $\psi$ to a finite group where $\psi \circ \phi (g)$ stays nontrivial. So, $G$ is residually finite.
\end{proof}
\newpage
\printbibliography

\end{document}